\newtheorem{theorem}{Theorem}[section]
\newtheorem{lemma}[theorem]{Lemma}
\newtheorem{proposition}[theorem]{Proposition}
\newtheorem{corollary}[theorem]{Corollary}
\theoremstyle{definition}
\theoremstyle{remark}
\newtheorem{remark}[theorem]{Remark}
\newcommand{\tr}{{\rm{tr}}}
\numberwithin{equation}{section}
\newtheorem*{theorem*}{Theorem}
\begin{document}
\title[On the matrix harmonic mean ]{ On the matrix harmonic mean}\author[M. Sababheh]{M. Sababheh}\address{Department of Basic Sciences, Princess Sumaya University For Technology, Al Jubaiha, Amman 11941, Jordan.}\email{\textcolor[rgb]{0.00,0.00,0.84}{sababheh@psut.edu.jo, sababheh@yahoo.com}}

\subjclass[2010]{15A39, 15B48, 47A30, 47A63.}

\keywords{positive matrices, matrix means, norm inequalities, harmonic means, Heinz means.}
\maketitle
\begin{abstract}
The main goal of this article is to present new types of  inequalities refining and reversing inequalities of the harmonic mean of scalars and matrices. Furthermore, implementing the spectral decomposition of positive matrices, we present a new type of inequalities treating certain harmonic matrix perturbation.
\end{abstract}
\section{introduction and Motivation}
For two positive numbers $a,b$ and $0\leq t\leq 1,$ the Heinz means are defined by
$$H_t(a,b)=\frac{a^{1-t}b^{t}+a^{t}b^{1-t}}{2}.$$ These means attracted researchers working in the field of matrix inequalities, where the matrix versions of these means have their role. In the setting of matrices, $\mathbb{M}_n$ will denote the algebra of all complex $n\times n$ matrices, $\mathbb{M}_n^{+}$ will denote the cone of positive semi-definite matrices in $\mathbb{M}_n$ while $\mathbb{M}_n^{++}$ will denote the cone of strictly positive matrices in $\mathbb{M}_n$. A possible matrix version of the Heinz means is $$\frac{A^{1-t}B^{t}+A^{t}B^{1-t}}{2}, A,B\in\mathbb{M}_n^{+}.$$ Using the notation $\||\;\;\||$  for an arbitrary unitarily invariant norm on $\mathbb{M}_n$, the quantity
$$\frac{1}{2}\||A^{1-t}XB^{t}+A^{t}XB^{1-t}\||, A,B\in\mathbb{M}_n^+, X\in\mathbb{M}_n$$ happens to be among the most natural possible matrix versions of the numerical Heinz means.

Numerous researchers have investigated these means and their inequalities. For $0\leq t\leq 1, A,B\in\mathbb{M}_n^+$ and $X\in\mathbb{M}_n$, the inequality
$$2\||A^{\frac{1}{2}}XB^{\frac{1}{2}}\||\leq \||A^{1-t}XB^{t}+A^{t}XB^{1-t}\||\leq \||AX+XB\||$$
is well known by the Heinz inequality \cite{B2}. Refining, reversing and obtaining variants of this inequality received a considerable attention in the literature that the interested reader can see in \cite{bak,conde,kitt_convex,kittanehmanasreh,krnic,saboam,sab_conv,zhao}, for example.

Going back to the numerical version of the Heinz means $H_t(a,b)$, we have the known inequality
\begin{equation}\label{heinz_original_scalars}
\sqrt{ab}\leq H_t(a,b)\leq \frac{a+b}{2}, 0\leq t\leq 1.
\end{equation}
This inequality is usually interpreted by saying that the Heinz means interpolate between the geometric  and arithmetic means. In fact, the Heinz means can be thought of an average of certain geometric means. Recall that for $a,b>0, 0\leq t\leq 1,$ the weighted geometric mean is defined by $a\#_t b=a^{1-t}b^{t}.$ Therefore, $H_t(a,b)=\frac{a\#_{t}b+a\#_{1-t}b}{2}$ and \eqref{heinz_original_scalars} can be simply written as
$$a\# b\leq H_t(a,b)\leq a\nabla b,$$ where $a\#b=a\#_{\frac{1}{2}}b$ is the geometric mean and $a\nabla b=a\nabla_{\frac{1}{2}}b$ is the arithmetic mean computed via the formula $a\nabla_t b=(1-t)a+t b.$

Among the most interesting inequalities of the Heinz means is its comparison with the Heron means, defined for $a,b>0$ and $0\leq t\leq 1$ by
$$K_t(a,b)=(1-t)(a\#b)+t(a\nabla b).$$ This comparison was shown in \cite{bhatiaint} as follows
\begin{equation}\label{heron_heinz_bhatia}
H_t(a,b)\leq K_{\alpha(t)}(a,b)\;{\text{where}}\;\alpha(t)=(1-2t)^2.
\end{equation}
Then this inequality was further explored in \cite{singh}, for example.

Our motivation of the current work begins with \eqref{heron_heinz_bhatia}. It is a main goal of this paper to  prove versions of \eqref{heron_heinz_bhatia} for the harmonic mean. Recall that for $a,b>0$ and $0\leq t\leq 1$, the weighted harmonic mean is defined by $a!_t b=((1-t)a^{-1}+t b^{-1})^{-1}.$ In particular $a!_{\frac{1}{2}}b$ will be simply denoted by $a!b.$

For two positive numbers $a,b$ and $0\leq t\leq 1,$ we define the harmonic Heinz means  by
$$!_t(a,b)=\frac{a!_{t}b+a!_{1-t}b}{2}.$$ Since $a!_{t}b\leq a\#_t b\leq a\nabla_t b,$ \cite{liao}, we have $!_t(a,b)\leq H_t(a,b)\leq \frac{a+b}{2},$ for $0\leq t\leq 1.$ Also, direct computations show that $a!b\leq !_t(a,b).$

In this paper, we also define the Heron-harmonic means by
$$F_t(a,b)=(1-t)(a!b)+t(a\nabla b), a,b>0$$ similar to the definition of $K_t(a,b)$ above.

We shall prove that
$$!_t(a,b)\leq F_{\alpha(t)}(a,b), \alpha(t)=(1-2t)^2,$$ an inequality similar to \eqref{heinz_original_scalars}. However, the method of proof is a result of a new approach to tickle these inequalities.

This new approach is  dealing with the geometric meaning of these inequalities. More precisely, we will see that the quadratic polynomial defined by $F_{\alpha(t)}(a,b)$ is the quadratic polynomial interpolating $!_t(a,b)$ at $t=0,\frac{1}{2},1.$ Then the comparison $!_t(a,b)\leq F_{\alpha(t)}(a,b)$ comes as a special case of a more general statement.\\

Notice that the inequality $!_t(a,b)\leq F_{\alpha(t)}(a,b)$ can be simply written as
\begin{equation}\label{heinz_har_intro}
 !_t(a,b)+4t(1-t)(a\nabla b-a!b)\leq a\nabla b,
\end{equation}
 which is a refinement of the inequality $!_t(a,b)\leq a\nabla b.$ However, this inequality is an interesting refinement as it presents a quadratic refining term in $t$. This last inequality is similar to the recent refinement of the Heinz inequality $$H_t(a,b)+4t(1-t)(a\nabla b-a\#b)\leq a\nabla b,$$ shown in \cite{krnic}. This inequality of \cite{krnic} follows from \eqref{heron_heinz_bhatia}.

Observe that the refining term in \eqref{heinz_har_intro} is $4t(1-t)(a\nabla b-a!b)$, which is added to $!_t(a,b).$ Once we establish these inequalities, we present multiplicative versions; where the refining term is multiplied by $!_t(a,b).$

Then following the same guideline, we prove quadratic refinements and reverses of the arithmetic-harmonic and geometric-harmonic inequalities. The geometric meaning of these refinements will be similar to that of the Harmonic-Heinz means, and will not be emphasized further.

We emphasize that the technique we use to prove these inequalities is new and is different from the techniques used in all references treating this topic. However, this idea is motivated by our recent work in \cite{kitt_mos_sab}, where the Heinz means themselves were explored.

Once the above numerical inequalities are proved, we prove their matrix versions. For example, we show that
$$!_t(A,B)+\frac{t(1-t)}{\tau(1-\tau)}\left[A\nabla B-!_{\tau}(A,B)\right]\leq A\nabla B,$$
$$\det(A\nabla B)^{\frac{1}{n}}\geq \det(!_t(A,B))^{\frac{1}{n}}+\frac{t(1-t)}{\tau(1-\tau)}\det(A\nabla B-!_{\tau}(A,B))^{\frac{1}{n}},$$
$$\left(\frac{AB^{-1}+BA^{-1}+2I}{4}\right)^{4t(1-t)}!_t(A,B)\leq A\nabla B,$$
for certain parameters and $A,B\in\mathbb{M}_n^{++}.$ More results treating the matrix arithmetic-harmonic and geometric-harmonic means will be presented too.

Then, we introduce a new type of inequalities treating the matrix harmonic means. For example, we show that
$$\|X\|_2^2\;\|(1-t)A^{-1}X+t XB^{-1}\|_2^{-1}\leq\|(1-t)AX+t XB\|_2$$ for $A,B\in\mathbb{M}_n^{++}, X\in\mathbb{M}_n$ and $0\leq t\leq 1.$ See Theorem \ref{thm_har_young} for the details. The idea of this inequality is new and it extends similar matrix versions of the geometric mean to the context of harmonic mean. The proof of this new inequality is based on convex functions.

For the used notation, if $0\leq \tau\leq 1,$ we will use the functions
$$r(\tau):=\min\{\tau,1-\tau\}\;{\text{and}}\;R(\tau)=\max\{\tau,1-\tau\}.$$
\section{Main results}
In this part of the paper, we present some new scalar inequalities for the harmonic mean $a!_tb$ and the harmonic-Heinz mean $!_t(a,b).$ These inequalities will be needed later to prove the corresponding matrix versions.

\subsection{The Harmonic-Heinz means}
\begin{proposition}
For $c>0$, let $$f(t)=\frac{1+c-(1!_{t}c+1!_{1-t}c)}{t(1-t)}.$$ Then $f$ is decreasing on $\left(0,\frac{1}{2}\right)$ and is increasing on
$\left(\frac{1}{2},1\right).$
\end{proposition}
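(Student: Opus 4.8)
The plan is to replace $f$ by an explicit rational expression in $t$ in which monotonicity is transparent. First I would unfold the definition of the weighted harmonic mean at $a=1$, $b=c$: since $1!_\tau c=\bigl((1-\tau)+\tau c^{-1}\bigr)^{-1}$, one obtains
\[
1!_t c=\frac{c}{c+t(1-c)},\qquad 1!_{1-t}c=\frac{c}{1-t(1-c)}.
\]
The whole point is that these two denominators, $c+t(1-c)$ and $1-t(1-c)$, differ only in the sign of the linear term $t(1-c)$, so their sum and product will combine very cleanly.

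Next I would put the sum over a common denominator. Writing $s=t(1-c)$, the numerator of $1!_t c+1!_{1-t}c$ becomes $c\bigl[(1-s)+(c+s)\bigr]=c(1+c)$, so the linear terms cancel, while the denominator is $(c+s)(1-s)=c+s(1-c)-s^2=c+t(1-t)(1-c)^2$. Hence
\[
1!_t c+1!_{1-t}c=\frac{c(1+c)}{c+t(1-t)(1-c)^2}.
\]
Subtracting this from $1+c$ and factoring gives $1+c-(1!_t c+1!_{1-t}c)=(1+c)\,t(1-t)(1-c)^2\big/\bigl(c+t(1-t)(1-c)^2\bigr)$, so the factor $t(1-t)$ cancels against the denominator of $f$, leaving the closed form
\[
f(t)=\frac{(1+c)(1-c)^2}{c+t(1-t)(1-c)^2}.
\]

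With this form the conclusion is immediate. The denominator is $c+(1-c)^2\,g(t)$ with $g(t)=t(1-t)$ and constant coefficient $(1-c)^2\ge 0$; since $g$ increases on $\left(0,\frac12\right)$ and decreases on $\left(\frac12,1\right)$, so does the denominator, and it stays strictly positive because it is at least $c>0$. As $f$ is a positive constant divided by this denominator, $f$ has the opposite behaviour, namely decreasing on $\left(0,\frac12\right)$ and increasing on $\left(\frac12,1\right)$, as claimed; the case $c=1$ is degenerate, giving $f\equiv 0$.

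I do not anticipate a genuine obstacle here: the entire difficulty is the bookkeeping in the first two steps, namely recognising that the cross terms cancel in the numerator and that the product of the two denominators collapses to $c+t(1-t)(1-c)^2$. Once this closed form is secured, the monotonicity follows from the elementary behaviour of $t\mapsto t(1-t)$ with no further estimation. It is worth recording that the same closed form should also streamline the subsequent comparison $!_t(a,b)\le F_{\alpha(t)}(a,b)$, since it exhibits the refining term explicitly through the factor $t(1-t)$.
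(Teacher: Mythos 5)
Your proof is correct, and it takes a genuinely different route from the paper's. The paper never computes $f$ in closed form: it writes $f(t)=g(t)+g(1-t)$ with $g(t)=\frac{1\nabla_t c-1!_{t}c}{t(1-t)}$, checks that $g''(t)=\frac{2(c-1)^4}{((1-t)c+t)^3}>0$ so that $g$, and hence $f$, is convex on $(0,1)$, and then locates the minimum at $t=\frac{1}{2}$ using $f'\left(\frac{1}{2}\right)=0$ together with a Taylor expansion. You instead collapse $f$ to the explicit rational function
\[
f(t)=\frac{(1+c)(1-c)^2}{c+t(1-t)(1-c)^2},
\]
and your algebra checks out: with $s=t(1-c)$, the cross terms in the numerator of $1!_{t}c+1!_{1-t}c$ do cancel to give $c(1+c)$, and $(c+s)(1-s)=c+s(1-c)-s^2=c+t(1-t)(1-c)^2$. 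From this form the conclusion is immediate from the monotonicity of $t\mapsto t(1-t)$; moreover, for $c\neq 1$ you get strict monotonicity, the symmetry $f(t)=f(1-t)$, and the exact minimum value $f\left(\frac{1}{2}\right)=\frac{4(1-c)^2}{1+c}$, which is precisely the quantity that feeds into Corollary \ref{heron_harm_cor}. Your argument is thus both more elementary (no differentiation at all) and more informative here; it even recovers the paper's convexity observation for free, since the reciprocal of a positive concave function is convex. What the paper's calculus-based template buys is portability: the same convexity-plus-symmetry scheme is reused essentially verbatim for the multiplicative version and the other mean comparisons later in the paper, where no comparably clean closed form is available.
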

\begin{proof}
Notice first that if $c=1$, then $f=0$ and there is nothing to prove. So, without loss of generality, $c\not=1.$ Observe that $f(t)=g(t)+g(1-t),$ where
$$g(t)=\frac{1\nabla_t c-1!_{t}c}{t(1-t)}.$$ Direct computations show that
$$g''(t)=\frac{2(c-1)^4}{((1-t)c+t)^3}.$$ Since $0<t<1$ and $c>0$, it follows that $g''>0$ and $g$ is convex on $(0,1).$ Since $g$ is convex and $f(t)=g(t)+g(1-t),$ it follows that $f$ is convex on $(0,1).$ But then, either $f$ is monotone on $(0,1)$ or $f$ is decreasing on $(0,t_0)$ and increasing on $(t_0,1)$ for some $t_0\in(0,1).$ We assert that $t_0=\frac{1}{2}.$ Notice first that $f'\left(\frac{1}{2}\right)=0.$ Therefore, if $t\in (0,1)$ then, for some $\xi_t$ between $t$ and $\frac{1}{2}$,
$$f(t)=f\left(\frac{1}{2}\right)+\frac{f''(\xi_t)}{2}\left(t-\frac{1}{2}\right)^2.$$ Since $f$ is convex, it follows that $f''\geq 0$ and hence $f(t)\geq f\left(\frac{1}{2}\right).$ This proves that $t_0=\frac{1}{2}$.
\end{proof}
In particular, the function $f(t)=\frac{1+c-(1!_{t}c+1!_{1-t}c)}{t(1-t)}$ attains its minimum at $t_0=\frac{1}{2}$ and $f(t)\geq f\left(\frac{1}{2}\right).$
This entails the following arithmetic-harmonic Heinz-type inequality.
\begin{corollary}\label{heron_harm_cor}
Let $a,b>0$ and $0\leq t\leq 1.$ Then
$$\frac{a!_t b+a!_{1-t}b}{2}+4t(1-t)(a\nabla b-a!b)\leq a\nabla b.$$
\end{corollary}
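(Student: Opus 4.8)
The plan is to read the Corollary straight off the Proposition after a normalization. First I would exploit the positive homogeneity of all three means involved: setting $c=b/a$, one checks that $a!_t b = a\,(1!_t c)$, $a\nabla b = a\,(1\nabla c)$, and $a!b = a\,(1!c)$, since $a!_t b=((1-t)a^{-1}+tb^{-1})^{-1}=a\bigl((1-t)+t/c\bigr)^{-1}$ and the arithmetic and harmonic means scale identically. Dividing the target inequality through by $a>0$ therefore reduces it to the one-parameter statement
$$\frac{1!_t c + 1!_{1-t}c}{2} + 4t(1-t)\bigl(1\nabla c - 1!c\bigr) \leq 1\nabla c,$$
so it suffices to treat $a=1$, $b=c$.

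Next I would dispose of the boundary, which lies outside the domain of the Proposition. For $t\in\{0,1\}$ one has $1!_0 c = 1$ and $1!_1 c = c$, so $\tfrac12\bigl(1!_t c + 1!_{1-t}c\bigr)=\tfrac{1+c}{2}=1\nabla c$ while the middle term vanishes; the inequality holds with equality. Hence I may assume $t\in(0,1)$, which is exactly where the Proposition applies.

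For $t\in(0,1)$ I would invoke the Proposition, which states that $f$ attains its minimum at $t=\tfrac12$, that is $f(t)\geq f\bigl(\tfrac12\bigr)$ for $f(t)=\dfrac{1+c-(1!_t c + 1!_{1-t}c)}{t(1-t)}$. The key computation is to evaluate both sides explicitly. Using $1+c = 2(1\nabla c)$ and $1!_t c + 1!_{1-t}c = 2\,!_t(1,c)$ gives $f(t)=\dfrac{2\bigl[\,1\nabla c - \,!_t(1,c)\,\bigr]}{t(1-t)}$, while $1!_{1/2}c = 1!c$ and $t(1-t)=\tfrac14$ at $t=\tfrac12$ give $f\bigl(\tfrac12\bigr)=8\,(1\nabla c - 1!c)$. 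Substituting these into $f(t)\geq f\bigl(\tfrac12\bigr)$ and multiplying through by the positive quantity $\tfrac12 t(1-t)$ yields
$$1\nabla c - \,!_t(1,c) \geq 4t(1-t)\,(1\nabla c - 1!c),$$
which is precisely the normalized inequality after transposing the refining term.

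I do not expect a genuine obstacle: granted the Proposition, the Corollary is pure bookkeeping. The only two points that deserve care are the homogeneity reduction—one must confirm that the refining term $4t(1-t)(a\nabla b - a!b)$ scales by the same factor $a$ as the rest, so that the single division by $a$ is legitimate—and the clean identification $f\bigl(\tfrac12\bigr)=8(1\nabla c - 1!c)$, which is exactly what turns the abstract minimality of $f$ into the explicit quadratic refining factor $4t(1-t)$ appearing in the statement.
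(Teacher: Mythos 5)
Your proposal is correct and follows exactly the paper's route: the paper derives the corollary from the Proposition's conclusion that $f(t)\geq f\bigl(\tfrac{1}{2}\bigr)$, with the homogeneity reduction $c=b/a$ and the evaluation $f\bigl(\tfrac{1}{2}\bigr)=8(1\nabla c-1!c)$ left implicit. Your write-up merely makes explicit the bookkeeping (scaling, boundary cases $t\in\{0,1\}$) that the paper omits.
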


Notice that Corollary \ref{heron_harm_cor}  can be read simply
as $$!_t(a,b)\leq F_{\alpha(t)}(a,b)\;{\text{where}}\;\alpha(t)=1-4t(1-t).$$

A full description of the interpolation of the Heinz harmonic mean is as follows.
 \begin{corollary}\label{full_comp_heinz}
  Let $a,b>0$ and fix $\tau\in (0,1).$ Then
 $$\frac{a\nabla b-!_{\tau}(a,b)}{\tau(1-\tau)}\leq \frac{a\nabla b-!_{t}(a,b)}{t(1-t)}$$ for $ t\leq r(\tau)$ or
 $t\geq R(\tau).$ On the other hand, if $r(\tau)\leq t\leq R(\tau)$, the inequality is reversed.
 \end{corollary}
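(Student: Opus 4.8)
The plan is to reduce the whole statement to the function $f$ of the Proposition by homogeneity, and then read off the corollary from the V-shape of $f$ together with a reflection symmetry about $t=\tfrac12$. First I would set $\phi(t)=\dfrac{a\nabla b-!_{t}(a,b)}{t(1-t)}$ and observe that it is nothing but a positive rescaling of $f$. Writing $c=b/a$ and using the scaling identity $a!_{t}b=a\,(1!_{t}c)$, a direct computation gives
$$a\nabla b-!_{t}(a,b)=\frac{a}{2}\bigl[(1+c)-(1!_{t}c+1!_{1-t}c)\bigr]=\frac{a}{2}\,t(1-t)\,f(t),$$
so that $\phi(t)=\tfrac{a}{2}f(t)$. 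Since $a/2>0$, the Proposition immediately transfers: $\phi$ is decreasing on $\left(0,\tfrac12\right)$ and increasing on $\left(\tfrac12,1\right)$. I would also record the symmetry $\phi(t)=\phi(1-t)$, which is clear because both $!_{t}(a,b)$ and $t(1-t)$ are invariant under $t\mapsto 1-t$.

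Next, fixing $\tau\in(0,1)$, I would argue by symmetry that it suffices to treat $\tau\leq\tfrac12$, so that $r(\tau)=\tau$ and $R(\tau)=1-\tau$; the case $\tau>\tfrac12$ follows by replacing $\tau$ with $1-\tau$ and using $\phi(\tau)=\phi(1-\tau)$. The claim then splits into an outer and an inner region. In the outer region: if $t\leq r(\tau)=\tau\leq\tfrac12$, then monotonicity of $\phi$ on $\left(0,\tfrac12\right)$ gives $\phi(t)\geq\phi(\tau)$; if $t\geq R(\tau)=1-\tau\geq\tfrac12$, then monotonicity on $\left(\tfrac12,1\right)$ together with $\phi(1-\tau)=\phi(\tau)$ gives $\phi(t)\geq\phi(1-\tau)=\phi(\tau)$. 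This is precisely the asserted inequality $\phi(\tau)\leq\phi(t)$. In the inner region $r(\tau)\leq t\leq R(\tau)$, I would split at $\tfrac12$: on $\left[\tau,\tfrac12\right]$ the decreasing branch yields $\phi(t)\leq\phi(\tau)$, and on $\left[\tfrac12,1-\tau\right]$ the increasing branch together with the symmetry yields $\phi(t)\leq\phi(1-\tau)=\phi(\tau)$, which is the reversed inequality.

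The one genuinely delicate point is the bookkeeping: one must keep track of which side of $\tfrac12$ each of $t$ and $\tau$ lies on and invoke the correct branch of monotonicity, with the reflection $\phi(\tau)=\phi(1-\tau)$ serving as the bridge between the two branches. There is no analytic obstacle remaining — the entire content of the statement is already encoded in the Proposition, which supplies the unimodal (decreasing-then-increasing) profile of $\phi$; the corollary is then a clean consequence of unimodality plus symmetry about $t=\tfrac12$. The degenerate case $a=b$ (i.e.\ $c=1$) makes $f\equiv 0$, so $\phi\equiv 0$ and every inequality holds trivially as an equality.
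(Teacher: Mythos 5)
Your proposal is correct and follows essentially the same route the paper intends: the paper states this corollary without proof as an immediate consequence of the preceding Proposition, and your argument—reduce to $f$ via homogeneity ($c=b/a$, so $\phi=\tfrac{a}{2}f$), then combine the decreasing/increasing profile of $f$ on either side of $\tfrac12$ with the symmetry $\phi(t)=\phi(1-t)$—is exactly the bookkeeping the paper leaves to the reader. Your explicit case split between the outer region ($t\leq r(\tau)$ or $t\geq R(\tau)$) and the inner region, with the reflection serving as the bridge, correctly fills in those omitted details.
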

Observe that the above inequality can be written as
\begin{equation}\label{harmonic_less}
!_{t}(a,b)\leq a\nabla b+\frac{t(1-t)}{\tau(1-\tau)}\left[!_{\tau}(a,b)-a\nabla b\right],
\end{equation}
when $t\leq r(\tau)$ or $t\geq R(\tau)$, while the inequality is reversed when $r(\tau)\leq t\leq R(\tau).$ Notice that the right hand side of
\eqref{harmonic_less} is a quadratic polynomial in $t$, which coincides with $!_t$ at $t=0,\tau,1.$ Therefore,
$$Q_{\tau}(t;a,b):=a\nabla b+\frac{t(1-t)}{\tau(1-\tau)}\left[!_{\tau}(a,b)-a\nabla b\right]$$ is the quadratic polynomial interpolating $!_t(a,b)$ at $t=0,\tau,1.$

Adopting the notation $Q_{\tau}$ as above, we have $!_{t}(a,b)\leq Q_{\tau}(t;a,b)$ when $t\leq r(\tau)$ or $t\geq R(\tau),$ while the inequality is reversed when $r(\tau)\leq t\leq R(\tau).$ This provides a geometric meaning of these refinements.



Our next target is to present multiplicative refinements and reverses.
\begin{proposition}
For $c>0$, let
$$f(t)=\left(\frac{1+c}{1!_t c+1!_{1-t}c}\right)^{\frac{1}{t(1-t)}}.$$ Then $f$ is decreasing on $\left(0,\frac{1}{2}\right)$ and is increasing on
$\left(\frac{1}{2},1\right).$
\end{proposition}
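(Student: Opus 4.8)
The plan is to reduce $f$ to an explicit elementary form and then read off its monotonicity, rather than attempting to differentiate $f$ twice directly. The starting point is an algebraic simplification of the ratio inside the bracket. Writing $1!_t c=\frac{c}{(1-t)c+t}$ and $1!_{1-t}c=\frac{c}{tc+(1-t)}$, and setting $u=(1-t)c+t$ and $v=tc+(1-t)$, I first observe that $u+v=1+c$, so that
$$1!_t c+1!_{1-t}c=c\,\frac{u+v}{uv}=\frac{c(1+c)}{uv},\qquad\text{hence}\qquad \frac{1+c}{1!_t c+1!_{1-t}c}=\frac{uv}{c}.$$

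Next I would expand the product $uv$. A short computation gives $uv=c+t(1-t)(c-1)^2$, so that the bracketed quantity becomes
$$\frac{1+c}{1!_t c+1!_{1-t}c}=1+\frac{(c-1)^2}{c}\,t(1-t).$$
Setting $K=\frac{(c-1)^2}{c}\ge 0$ and $s=t(1-t)$, this turns the definition of $f$ into the clean form $f(t)=(1+Ks)^{1/s}$. The degenerate case $c=1$ gives $K=0$ and $f\equiv 1$, so I may assume $c\neq 1$ and $K>0$.

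With this reduction the monotonicity is immediate. Taking logarithms, $\log f=\frac{\log(1+Ks)}{s}$, which as a function of $s>0$ equals $K\cdot\frac{\log(1+x)}{x}$ with $x=Ks$. Since $x\mapsto\frac{\log(1+x)}{x}$ is strictly decreasing on $(0,\infty)$ — a fact I would verify via $\frac{d}{dx}\left(\frac{x}{1+x}-\log(1+x)\right)=-\frac{x}{(1+x)^2}<0$ together with the vanishing of $\frac{x}{1+x}-\log(1+x)$ at $x=0$ — and since $x=Ks$ is increasing in $s$, the map $s\mapsto\log f$ is decreasing, and therefore so is $s\mapsto f$. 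Finally I compose with $s=t(1-t)$, which increases on $\left(0,\frac12\right)$ and decreases on $\left(\frac12,1\right)$: composing a decreasing function of $s$ with an increasing (resp. decreasing) function of $t$ shows that $f$ is decreasing on $\left(0,\frac12\right)$ and increasing on $\left(\frac12,1\right)$.

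The main obstacle is entirely front-loaded into the algebraic identity $\frac{1+c}{1!_t c+1!_{1-t}c}=1+\frac{(c-1)^2}{c}\,t(1-t)$; spotting this collapse is what makes the problem tractable, since a direct attack through $f''$ (or through the convexity-of-$\log f$ argument used in the additive Proposition) would be far more laborious. Once the identity is in hand, no genuine difficulty remains: the problem becomes the composition of the elementary decreasing function $(1+Ks)^{1/s}$ with the unimodal map $t\mapsto t(1-t)$.
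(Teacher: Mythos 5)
Your proof is correct, and it takes a genuinely different route from the paper. The paper works directly with $g(t)=\log f(t)$, computes $g'(t)=\frac{(2t-1)h(t)}{t^2(1-t)^2}$ for an explicit but unwieldy auxiliary function $h$, and then pins down the sign of $h$ by analyzing $h'$ together with the boundary values $h(0)=h(1)=0$; the monotonicity of $f$ on either side of $t=\frac12$ then falls out of the factor $(2t-1)$. You instead collapse the entire expression algebraically: writing $1!_t c=\frac{c}{(1-t)c+t}$ and $1!_{1-t}c=\frac{c}{tc+(1-t)}$, the identity
\begin{equation*}
\frac{1+c}{1!_t c+1!_{1-t}c}=\frac{\bigl((1-t)c+t\bigr)\bigl(tc+(1-t)\bigr)}{c}=1+\frac{(c-1)^2}{c}\,t(1-t)
\end{equation*}
is correct (the expansion uses $(1-t)^2+t^2=1-2t(1-t)$), so $f(t)=(1+Ks)^{1/s}$ with $K=\frac{(c-1)^2}{c}$ and $s=t(1-t)$. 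From there the argument is elementary and airtight: $\log f=K\cdot\frac{\log(1+Ks)}{Ks}$ is decreasing in $s$ because $x\mapsto\frac{\log(1+x)}{x}$ is strictly decreasing on $(0,\infty)$, and composing with the unimodal map $t\mapsto t(1-t)$ gives exactly the claimed monotonicity; you also correctly dispose of the degenerate case $c=1$, where $f\equiv 1$. What your approach buys is considerable: it avoids all the derivative bookkeeping, it makes the symmetry of $f$ about $t=\frac12$ and the location of the minimum structurally obvious (since $f$ depends on $t$ only through $t(1-t)$), and the same identity would equally streamline the paper's additive Proposition, since $\frac{1+c-(1!_tc+1!_{1-t}c)}{t(1-t)}=\frac{(1+c)(c-1)^2}{c+(c-1)^2t(1-t)}$ is likewise a decreasing function of $s=t(1-t)$. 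The paper's brute-force differentiation, by contrast, generalizes more mechanically to situations where no such algebraic collapse is available, but for this particular statement your route is both shorter and more illuminating.
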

\begin{proof}
Consider the function
$$g(t)=\log f(t)=\frac{\log(1+c)-\log(1!_t c+1!_{1-t}c)}{t(1-t)}.$$ Then
$$g'(t)=\frac{(2t-1)h(t)}{t^2(1-t)^2},$$ where
$$h(t)=\frac{(c -1)^2 (t-1) t}{(1-t+c\;t) (c + t - c\; t)} +
 \log(1 + c) - \log\left( \frac{c}{1-t+c\;t} + \frac{c}{c + t - c\; t}\right).$$ Moreover,
 $$h'(t)=\frac{(c-1)^4t(1-t)(1-2t)}{(1-t+c\;t)^2(c-c\;t+t)^2}.$$ Clearly, $h'(t)>0$ when $t<\frac{1}{2}$ and $h'(t)<0$ when $t>\frac{1}{2}.$ Since $h(0)=h(1)=0$, it follows that $h(t)\geq 0$. But then, $g'(t)<0$ when $t<\frac{1}{2}$ and $g'(t)>0$ when $t>\frac{1}{2}.$ This completes the proof.
\end{proof}
In particular, the function $f(t)=\left(\frac{1+c}{1!_t c+1!_{1-t}c}\right)^{\frac{1}{t(1-t)}}$ attains its minimum at $t_0=\frac{1}{2}.$ This entails the following multiplicative refinement of the Heinz-type inequality $!_t(a,b)\leq a\nabla b.$
\begin{corollary}\label{cor_1/2_multi}
 Let $a,b>0$ and $0<t<1.$ Then
\begin{equation}\label{multi_ref_heinz_har}
\left(\frac{a\nabla b}{a!b}\right)^{4t(1-t)}!_t(a,b)\leq a\nabla b.
\end{equation}
\end{corollary}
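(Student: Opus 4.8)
The plan is to obtain the corollary directly from the preceding Proposition, treating first the normalized case $a=1$, $b=c$ and then passing to general $a,b$ by homogeneity. The only fact I need from the Proposition is that, being decreasing on $\left(0,\frac{1}{2}\right)$ and increasing on $\left(\frac{1}{2},1\right)$, the function $f$ attains its global minimum on $(0,1)$ at $t=\frac{1}{2}$; hence $f(t)\ge f\left(\frac{1}{2}\right)$ for every $t\in(0,1)$.

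Next I would evaluate $f\left(\frac{1}{2}\right)$ explicitly. At $t=\frac{1}{2}$ the exponent $\frac{1}{t(1-t)}$ equals $4$ and $1!_{1/2}c+1!_{1/2}c=2(1!c)$, so that, using $1+c=2(1\nabla c)$,
$$f\left(\frac{1}{2}\right)=\left(\frac{1+c}{2(1!c)}\right)^{4}=\left(\frac{1\nabla c}{1!c}\right)^{4}.$$
To turn $f(t)\ge f\left(\frac{1}{2}\right)$ into multiplicative form, I would note that the base of $f(t)$ is exactly $\frac{1+c}{1!_tc+1!_{1-t}c}=\frac{1\nabla c}{!_t(1,c)}$, since $1!_tc+1!_{1-t}c=2\,!_t(1,c)$. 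Raising $f(t)\ge f\left(\frac{1}{2}\right)$ to the positive power $t(1-t)$ then yields
$$\frac{1\nabla c}{!_t(1,c)}\ge\left(\frac{1\nabla c}{1!c}\right)^{4t(1-t)},$$
which, after clearing the denominator, is precisely \eqref{multi_ref_heinz_har} for $a=1$, $b=c$.

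It remains to remove the normalization. Each of the quantities $a\nabla b$, $a!b$ and $!_t(a,b)$ is positively homogeneous of degree one in $(a,b)$, so the ratio $\frac{a\nabla b}{a!b}$ is invariant under scaling and the whole of \eqref{multi_ref_heinz_har} is homogeneous of degree one; dividing through by $a>0$ and setting $c=b/a$ reduces the general case to the normalized one already established. I do not anticipate a genuine obstacle once the Proposition is in hand; the one point needing care is the exponent bookkeeping — verifying that the inner exponent $\frac{1}{t(1-t)}$ of $f$ combines with the $t(1-t)$-th power to give exactly $4t(1-t)$, and that the inequality direction is preserved because $t(1-t)>0$ and all bases are positive.
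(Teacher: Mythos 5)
Your proof is correct and follows essentially the same route as the paper: the Proposition gives $f(t)\ge f\left(\frac{1}{2}\right)$, evaluating $f\left(\frac{1}{2}\right)=\left(\frac{1\nabla c}{1!c}\right)^{4}$ and unwinding the exponent yields the case $a=1$, and homogeneity of degree one in $(a,b)$ handles the general case with $c=b/a$. The paper leaves these details implicit ("this entails the following multiplicative refinement"), and your exponent bookkeeping and scaling argument fill them in correctly.
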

Notice that \eqref{multi_ref_heinz_har} is a refinement of  $!_t(a,b)\leq a\nabla b$ because $\frac{a\nabla b}{a!b}\geq 1.$ Furthermore, the constant $\left(\frac{a\nabla b}{a!b}\right)$ is known as the Kantorovich constant, and has appeared in many recent refinements of some mean inequalities. The reader is referred to \cite{zuo} as a sample of these studies.\\
A full multiplicative comparison is then given as follows.
\begin{corollary}\label{multi_har_cor}
Let $a,b>0$ and $0<t,\tau<1.$ If $t\leq r(\tau)$ or $t\geq R(\tau)$, then
$$!_{t}(a,b)\left(\frac{a\nabla b}{!_{\tau}(a,b)}\right)^{\frac{t(1-t)}{\tau(1-\tau)}}\leq a\nabla b.$$ On the other hand, if $r(\tau)\leq t\leq R(\tau)$, the inequality is reversed.
\end{corollary}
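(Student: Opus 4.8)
The plan is to recognize that this multiplicative comparison is precisely a restatement of the monotonicity of the function $f$ from the preceding proposition, once the asserted inequality is rewritten in terms of $f$. First I would normalize by homogeneity: since $a\nabla b$ and $!_s(a,b)$ are each homogeneous of degree one in $(a,b)$, the ratio $\frac{a\nabla b}{!_s(a,b)}$ is homogeneous of degree zero and hence depends only on $c=b/a$; dividing numerator and denominator by $a$ yields $\frac{a\nabla b}{!_s(a,b)}=\frac{1+c}{1!_s c+1!_{1-s}c}$. Consequently the function
$$f(s)=\left(\frac{a\nabla b}{!_s(a,b)}\right)^{\frac{1}{s(1-s)}}$$
coincides with the function studied in the previous proposition, and is therefore decreasing on $\left(0,\tfrac12\right)$ and increasing on $\left(\tfrac12,1\right)$.

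Next I would recast the claim as a comparison of the values $f(t)$ and $f(\tau)$. Dividing the asserted inequality by $a\nabla b$ and rearranging puts it in the form
$$\left(\frac{a\nabla b}{!_\tau(a,b)}\right)^{\frac{t(1-t)}{\tau(1-\tau)}}\le \frac{a\nabla b}{!_t(a,b)}.$$
Raising both sides to the positive power $\frac{1}{t(1-t)}$, and using that $x\mapsto x^{1/(t(1-t))}$ is increasing on $(0,\infty)$, this is equivalent to $f(\tau)\le f(t)$; the reversed inequality corresponds to $f(\tau)\ge f(t)$.

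Then I would invoke the symmetry $f(s)=f(1-s)$, which is immediate from the definition because both $1!_s c+1!_{1-s}c$ and $s(1-s)$ are invariant under $s\mapsto 1-s$. Since $\{r(\tau),R(\tau)\}=\{\tau,1-\tau\}$, this gives $f(\tau)=f(r(\tau))=f(R(\tau))$, and the monotonicity now closes the argument. If $t\le r(\tau)\le\tfrac12$, then $f$ decreasing on $\left(0,\tfrac12\right)$ yields $f(t)\ge f(r(\tau))=f(\tau)$; if $t\ge R(\tau)\ge\tfrac12$, then $f$ increasing on $\left(\tfrac12,1\right)$ yields $f(t)\ge f(R(\tau))=f(\tau)$. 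If instead $r(\tau)\le t\le R(\tau)$, then $t$ lies in an interval on which $f$ first decreases to its minimum at $\tfrac12$ and then increases, so its value is at most the larger of the two endpoint values, $f(t)\le\max\{f(r(\tau)),f(R(\tau))\}=f(\tau)$. Translating these back through the equivalence of the second paragraph gives exactly the asserted inequality and its reversal.

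I do not anticipate a genuine obstacle, since the whole statement collapses onto monotonicity already in hand; the only points needing care are the homogeneity reduction to $c=b/a$ and the legitimacy of raising both sides to the power $\frac{1}{t(1-t)}$ (valid because the bases are positive and this power map is increasing). The argument is the multiplicative mirror of the additive Corollary \ref{full_comp_heinz}, with sums replaced by products and with the role of the first proposition played by the second.
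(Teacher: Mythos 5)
Your proof is correct and follows exactly the route the paper intends: the corollary is stated there without proof precisely because it is the monotonicity-plus-symmetry consequence of the preceding proposition, which is what you spell out (homogeneity reduction to $c=b/a$, rewriting the claim as $f(\tau)\le f(t)$, and the case analysis around $t=\tfrac12$ using $f(r(\tau))=f(R(\tau))=f(\tau)$). No gaps; the only technicality worth a word is extending the open-interval monotonicity to endpoint comparisons via continuity of $f$ on $(0,1)$.
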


Letting $a=1$ in Corollaries \ref{full_comp_heinz} and \ref{multi_har_cor}, we obtain the inequalities
\begin{equation}\label{needed_1_further}
\frac{1\nabla b-!_{\tau}(1,b)}{1\nabla b-!_t(1,b)}\leq (\geq) \frac{\tau(1-\tau)}{t(1-t)}
\end{equation}
 and
\begin{equation}\label{needed_2_further}
\frac{\log 1\nabla b-\log!_{\tau}(1,b)}{\log 1\nabla b-\log !_t(1,b)}\leq (\geq) \frac{\tau(1-\tau)}{t(1-t)}.
\end{equation}
Inequalities \eqref{needed_1_further} and \eqref{needed_2_further} motivate the question about the relation between the quotients
$$\frac{1\nabla b-!_{\tau}(1,b)}{1\nabla b-!_t(1,b)}\;{\text{and}}\;\frac{\log 1\nabla b-\log!_{\tau}(1,b)}{\log 1\nabla b-\log !_t(1,b)}.$$

First of all, notice that the function $f(t)=!_t(1,b)$ is decreasing on $\left[0,\frac{1}{2}\right],$ increasing on $\left[\frac{1}{2},1\right]$ and is symmetric about $t=\frac{1}{2}.$ Therefore, if $\tau,t\in [0,1]$, then
\begin{equation}\label{needed_3_further}
!_t(1,b)\leq !_{\tau}(1,b),\;{\text{when}}\;r(\tau)\leq t\leq R(\tau),
\end{equation}
while we have the reversed inequality if $t\leq r(\tau)$ or $t\geq R(\tau).$
\begin{proposition}
Let $a,b>0$ and $0\leq \tau,t\leq 1.$ If $t\leq r(\tau)$ or $t\geq R(\tau),$ then
$$\frac{a\nabla b-!_{\tau}(a,b)}{a\nabla b-!_t(a,b)}\leq \frac{\log a\nabla b-\log!_{\tau}(a,b)}{\log a\nabla b-\log !_t(a,b)}.$$ On the other hand, if $r(\tau)\leq t\leq R(\tau),$ the inequality is reversed.
\end{proposition}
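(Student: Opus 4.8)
The plan is to recognize the two one‑sided quotients as being controlled by the logarithmic mean, after which the statement reduces to the monotonicity of $t\mapsto\,!_t(a,b)$ already recorded in \eqref{needed_3_further}. Write $M=a\nabla b$ and set $x=\,!_t(a,b)$, $y=\,!_\tau(a,b)$. If $a=b$ every mean equals $a$ and both sides are the indeterminate $0/0$, so I may assume $a\ne b$ and $t,\tau\in(0,1)$; then $0<x,y<M$ because $!_s(a,b)\le a\nabla b$ with equality only at $s\in\{0,1\}$. In particular all four quantities $M-x,\ M-y,\ \log M-\log x,\ \log M-\log y$ are strictly positive, so I can clear denominators. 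The asserted inequality
$$\frac{M-y}{M-x}\le\frac{\log M-\log y}{\log M-\log x}$$
is then equivalent, after cross-multiplication, to
$$\frac{M-y}{\log M-\log y}\le\frac{M-x}{\log M-\log x};$$
writing $L(u,v)=\dfrac{u-v}{\log u-\log v}$ for the logarithmic mean, this is exactly $L(M,y)\le L(M,x)$.

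Two elementary facts finish the argument. First I would show that for fixed $M>0$ the function $u\mapsto L(M,u)$ is increasing on $(0,M)$. Putting $s=u/M\in(0,1)$ and using homogeneity, $L(M,u)=M\,\dfrac{s-1}{\log s}$, and $\dfrac{d}{ds}\dfrac{s-1}{\log s}=\dfrac{g(s)}{(\log s)^2}$ with $g(s)=\log s-1+\tfrac1s$; since $g(1)=0$ and $g'(s)=\dfrac{s-1}{s^2}<0$ on $(0,1)$, we get $g\ge 0$ and hence $L(M,\cdot)$ is increasing. Second I would record the required ordering of $x$ and $y$. By homogeneity $!_s(a,b)=a\,!_s(1,b/a)$, so each of the two quotients is scale invariant and the comparison may be reduced to $a=1$, where \eqref{needed_3_further} applies. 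That relation gives $x=\,!_t(a,b)\ge\,!_\tau(a,b)=y$ precisely when $t\le r(\tau)$ or $t\ge R(\tau)$, and $x\le y$ when $r(\tau)\le t\le R(\tau)$.

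Combining the two facts closes the proof: when $t\le r(\tau)$ or $t\ge R(\tau)$ we have $y\le x$, so the monotonicity of $L(M,\cdot)$ yields $L(M,y)\le L(M,x)$, which is the claimed inequality; when $r(\tau)\le t\le R(\tau)$ the ordering is $x\le y$ and the same monotonicity reverses the conclusion. I expect the only real content to be the reformulation in the first paragraph—once the cross-multiplied inequality is identified as the monotonicity statement $L(M,y)\le L(M,x)$ for the logarithmic mean, everything else is routine. The remaining care is purely bookkeeping: excluding $a=b$ and the endpoints $t,\tau\in\{0,1\}$ where the quotients degenerate, and verifying scale invariance so that the $a=1$ monotonicity \eqref{needed_3_further} transfers to general $a,b$.
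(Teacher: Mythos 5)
Your proof is correct and follows essentially the same route as the paper: both arguments reduce the statement to the ordering of $!_t$ versus $!_\tau$ given by \eqref{needed_3_further} and then invoke the monotonicity of the same elementary function, since your claim that $u\mapsto L(M,u)$ is increasing on $(0,M)$ is exactly the paper's claim that $g(x)=\frac{\ln x}{x-1}$ is decreasing, applied at $x=u/M$. The only differences are cosmetic: you verify that monotonicity by differentiation and treat the degenerate cases $a=b$ and $t,\tau\in\{0,1\}$ explicitly, whereas the paper asserts the monotonicity of $g$ and leaves those cases implicit.
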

\begin{proof}
Without loss of generality, we may assume that $a=1.$ We begin with the case $r(\tau)\leq t\leq R(\tau).$ In this case, we have $!_t(1,b)\leq !_{\tau}(1,b)$ from \eqref{needed_3_further}. Dividing by $1\nabla b$ implies $\frac{!_t(1,b)}{1\nabla b}\leq \frac{!_{\tau}(1,b)}{1\nabla b}.$ Since the function $g(x)=\frac{\ln x}{x-1}$ is decreasing on $(0,\infty),$ it follows that
$$g\left(\frac{!_{\tau}(1,b)}{1\nabla b}\right)\leq g\left(\frac{!_t(1,b)}{1\nabla b}\right).$$ Simplifying this inequality gives the desired inequality in the case $r(\tau)\leq t\leq R(\tau).$ On the other hand, if $t\leq r(\tau)$ or $t\geq R(\tau),$ then $\frac{!_t(1,b)}{1\nabla b}\geq \frac{!_{\tau}(1,b)}{1\nabla b}.$ In this case,
$$g\left(\frac{!_t(1,b)}{1\nabla b}\right)\leq g\left(\frac{!_{\tau}(1,b)}{1\nabla b}\right),$$ which implies the desired inequality when $t\leq r(\tau)$ or $t\geq R(\tau).$
\end{proof}

\subsection{The arithmetic-harmonic mean inequality}
In this part of the paper, we present quadratic refinements and reverses of the arithmetic-harmonic mean inequality $a!_tb\leq a\nabla_t b, 0\leq t\leq 1.$\\
The proof of the following result is an immediate calculus application, where one step computation shows that the function $f(t)=\frac{1\nabla_t c-1!_t c}{t(1-t)}$ is increasing when $c>1$ and is decreasing when $c<1.$
\begin{proposition}\label{arith_har_num_first}
If $(b-a)(\tau-\nu)\geq0$, then
\begin{eqnarray*}
\tau(1-\tau)(a\nabla_{\nu}b-a!_{\nu}b)\leq\nu(1-\nu)(a\nabla_{\tau}b-a!_{\tau}b).
\end{eqnarray*}
On the other hand, if $(b-a)(\tau-\nu)\leq0$ then
\begin{eqnarray*}
\tau(1-\tau)(a\nabla_{\nu}b-a!_{\nu}b)\geq\nu(1-\nu)(a\nabla_{\tau}b-a!_{\tau}b).
\end{eqnarray*}
\end{proposition}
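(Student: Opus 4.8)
The plan is to reduce the two-variable statement to a one-variable monotonicity property by exploiting homogeneity, exactly as the sentence preceding the proposition suggests. Since both $a\nabla_t b$ and $a!_t b$ are homogeneous of degree one, setting $c=b/a$ (for $a>0$) I would factor
$$a\nabla_t b - a!_t b = a\left(1\nabla_t c - 1!_t c\right).$$
Dividing the target inequality by the positive quantity $\tau(1-\tau)\nu(1-\nu)$, the first claim becomes precisely $f(\nu)\leq f(\tau)$ and the second $f(\nu)\geq f(\tau)$, where
$$f(t):=\frac{1\nabla_t c-1!_t c}{t(1-t)}.$$
Here I work with $\tau,\nu\in(0,1)$; the endpoint cases are trivial, since $1\nabla_t c-1!_t c$ vanishes at $t=0,1$, making both sides of the displayed inequalities zero.

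Next I would put $f$ into closed form, which is where the real content lies. Writing $1\nabla_t c = 1+t(c-1)$ and $1!_t c = \dfrac{c}{c(1-t)+t}$ and placing them over the common denominator $c(1-t)+t$, a short simplification (conveniently carried out with the substitution $u=c-1$) collapses the numerator to $t(1-t)(c-1)^2$. The factor $t(1-t)$ then cancels, leaving the remarkably clean expression
$$f(t)=\frac{(c-1)^2}{c(1-t)+t}.$$
The promised one-step computation is now a routine differentiation giving
$$f'(t)=\frac{(c-1)^3}{\left(c(1-t)+t\right)^2},$$
whose sign is governed entirely by the sign of $c-1$. Thus $f$ is strictly increasing on $(0,1)$ when $c>1$, strictly decreasing when $c<1$, and identically zero when $c=1$.

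Finally I would match this monotonicity against the hypothesis by splitting into the two cases encoded in the product condition. If $(b-a)(\tau-\nu)\geq0$ with $b>a$, then $c>1$ and $\tau\geq\nu$, so monotonicity of the increasing $f$ gives $f(\nu)\leq f(\tau)$; if instead $b<a$, then $c<1$ and $\tau\leq\nu$, so monotonicity of the decreasing $f$ again gives $f(\nu)\leq f(\tau)$ (and $b=a$ is trivial). In either subcase $f(\nu)\leq f(\tau)$, which unwinds, upon multiplying back by $\tau(1-\tau)\nu(1-\nu)$, to the first displayed inequality. Replacing $\geq$ by $\leq$ in the hypothesis flips both subcases simultaneously and yields the reverse inequality. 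I do not expect a genuine obstacle: once the homogeneity reduction is in place, everything hinges on the tidy closed form for $f$, and the only point demanding care is the bookkeeping of the two sign subcases, which must be arranged so that both the increasing and the decreasing regimes land on the same conclusion $f(\nu)\leq f(\tau)$.
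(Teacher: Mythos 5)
Your proposal is correct and follows essentially the same route as the paper, which proves the proposition by observing that $f(t)=\frac{1\nabla_t c-1!_t c}{t(1-t)}$ is increasing for $c>1$ and decreasing for $c<1$ and then splitting into the sign cases of $(b-a)(\tau-\nu)$. Your closed form $f(t)=\frac{(c-1)^2}{c(1-t)+t}$ with $f'(t)=\frac{(c-1)^3}{\left(c(1-t)+t\right)^2}$ correctly supplies the ``one step computation'' the paper alludes to, and your endpoint and case bookkeeping is sound.
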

This is a quadratic refinement (reverse) of the arithmetic-harmonic mean inequality $a!_{t}b\leq a\nabla_tb.$ On the other hand, multiplicative versions can be proved as follows.

\begin{lemma}\label{arith_har_mono}
For $c>0$, define $f:(0,1)\to [0,\infty)$ by $$f(t)=\left(\frac{1\nabla_{t}c}{1!_{t}c}\right)^{\frac{1}{t(1-t)}}.$$ Then $f$ is decreasing on $\left(0,\frac{1}{2}\right)$ and is increasing on $\left(\frac{1}{2},1\right).$
\end{lemma}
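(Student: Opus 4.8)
The plan is to take logarithms and reduce the claim to a sign analysis, exactly as in the proof of the preceding multiplicative proposition. Since $1\nabla_{t}1=1!_{t}1=1$, the case $c=1$ gives $f\equiv 1$ and is trivial, so assume $c\neq 1$. Set $g(t)=\log f(t)=\dfrac{N(t)}{t(1-t)}$, where $N(t)=\log(1\nabla_{t}c)-\log(1!_{t}c)$. Writing $p(t)=1\nabla_{t}c=1+t(c-1)$ and $q(t)=c-t(c-1)=(1-t)c+t$, the harmonic mean is $1!_{t}c=c/q(t)$, so $N(t)=\log p(t)+\log q(t)-\log c$. In particular $N(0)=N(1)=0$, and since $p'=c-1=-q'$ one gets the compact form
$$N'(t)=(c-1)\left(\frac{1}{p(t)}-\frac{1}{q(t)}\right)=\frac{(c-1)^2(1-2t)}{p(t)q(t)},$$
where I used $q(t)-p(t)=(c-1)(1-2t)$.

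Next I would differentiate $g$ by the quotient rule, which after substituting the expression for $N'$ yields $g'(t)=\dfrac{(2t-1)h(t)}{t^2(1-t)^2}$, where
$$h(t)=N(t)-\frac{(c-1)^2\,t(1-t)}{p(t)q(t)}=\log p(t)+\log q(t)-\log c-\frac{(c-1)^2\,t(1-t)}{p(t)q(t)}.$$
The whole argument then hinges on showing $h(t)\geq 0$ on $(0,1)$. First, $h(0)=h(1)=0$. Second, differentiating $h$ and exploiting the same $q-p$ cancellation that produced the factored $N'$, I expect the derivative to collapse to the single clean term
$$h'(t)=\frac{(c-1)^4\,t(1-t)(1-2t)}{p(t)^2q(t)^2}.$$
Since $p(t),q(t)>0$ for $c>0$ and $0<t<1$, this shows $h'(t)>0$ on $\left(0,\tfrac12\right)$ and $h'(t)<0$ on $\left(\tfrac12,1\right)$; combined with the endpoint values $h(0)=h(1)=0$, this forces $h(t)\geq 0$ throughout $(0,1)$.

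With $h\geq 0$ in hand, the conclusion is immediate: the sign of $g'(t)=\dfrac{(2t-1)h(t)}{t^2(1-t)^2}$ is that of $2t-1$, so $g'<0$ on $\left(0,\tfrac12\right)$ and $g'>0$ on $\left(\tfrac12,1\right)$. Hence $g=\log f$, and therefore $f$ itself, is decreasing on $\left(0,\tfrac12\right)$ and increasing on $\left(\tfrac12,1\right)$, as claimed.

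The main obstacle is purely computational: verifying that the derivative of $h$ telescopes into the clean single-term expression above. That cancellation is what makes the sign of $h'$ transparent, and it mirrors the mechanism in the earlier multiplicative proposition. The two structural facts that make everything work are the endpoint vanishing $N(0)=N(1)=0$ and the identity $p'=-q'=c-1$, which produces the common factor $(1-2t)=\dfrac{q(t)-p(t)}{c-1}$ appearing in both $N'$ and $h'$.
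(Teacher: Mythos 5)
Your proof is correct, and every computation checks out: with $p(t)=1+t(c-1)$ and $q(t)=c-t(c-1)$ one indeed has $1!_{t}c=c/q(t)$, the quotient rule gives $g'(t)=\frac{(2t-1)h(t)}{t^{2}(1-t)^{2}}$ with $h(t)=N(t)-\frac{(c-1)^{2}t(1-t)}{p(t)q(t)}$, and the derivative really does telescope to $h'(t)=\frac{(c-1)^{4}t(1-t)(1-2t)}{p(t)^{2}q(t)^{2}}$, so the endpoint values $h(0)=h(1)=0$ force $h\geq 0$ and the sign of $g'$ is that of $2t-1$. Your route is, however, not the one the paper takes for this lemma. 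The paper reduces to exactly the same bracket --- its $F'(t)=\frac{2t-1}{t^{2}(1-t)^{2}}\,g(c)$, where its $g(c)$ is precisely your $h$, viewed as a function of $c$ with $t$ fixed --- but it establishes nonnegativity along the other variable: it computes $g'(c)=\frac{(c-1)^{3}(c+1)(1-t)^{2}t^{2}}{c\,p(t)^{2}q(t)^{2}}$, whose sign is that of $(c-1)^{3}$, and anchors at $g(1)=0$, which requires splitting into the cases $c<1$ and $c>1$. You instead differentiate in $t$ and anchor at $t=0,1$; since $(c-1)^{4}\geq 0$, your sign analysis is uniform in $c$ and needs no case split, and your proof is structurally identical to the paper's own proof of the preceding multiplicative harmonic-Heinz proposition (the same shape of $h'$ appears there), which is exactly the parallel you intended to exploit. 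The two approaches cost about the same; yours is marginally cleaner in handling all $c\neq 1$ at once and in recycling a mechanism already present in the paper, while the paper's $c$-derivative argument has the mild advantage of a first-order (rather than endpoint-plus-monotonicity) verification once the case split is accepted.
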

\begin{proof}
Let $F(t)=\log f(t)$. That is $$F(t)=\frac{\log\left((1-t+t c)(1-t+t c^{-1})\right)}{t(1-t)}.$$ Then
$$F'(t)=\frac{2t-1}{t^2(1-t)^2}g(c),$$ where
$$g(c)=\frac{(c-1)^2(t-1)t}{(1-t+t c)(c+t-t c)}+\log\left((1-t+t c^{-1})(1-t+t c)\right).$$ Now
$$g'(c)=\frac{(c-1)^3(c+1)(1-t)^2t^2}{c(1-t+t c)^2(c+t-t c)^2}.$$
If $c<1$ then $g'(c)<0$ and $g$ is decreasing. That is $g(c)\geq g(1)=0$ when $c<1$. But since $F'(t)=\frac{2t-1}{t^2(1-t)^2}g(c)$ it follows that $F'<0$ when $t<\frac{1}{2}$ and $F'>0$ when $t>\frac{1}{2}.$ This proves the case $c<1.$ A similar argument implies the case $c>1.$
\end{proof}
For the following result, we use the notation $K(a,b)$ to denote the Kantorovich constant defined by $K(a,b)=\frac{a\nabla b}{a!b}=\left(\frac{a\nabla b}{a\#b}\right)^2.$ This constant has bees used recently as a refining factor in these inequalities. See \cite{liao,zuo} for example.
\begin{corollary}\label{arith_har_kanto_cor}
Let $a,b>0$. Then for $0<t<1,$
$$(a!_{t}b)K(a,b)^{4t(1-t)}\leq a\nabla_{t}b.$$
\end{corollary}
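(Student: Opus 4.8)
The plan is to reduce the two-variable inequality to the one-variable function $f$ of Lemma \ref{arith_har_mono} by exploiting the positive homogeneity of all the means involved, and then to extract the stated Kantorovich refinement from the fact that $f$ attains its minimum at $t=\frac12$.

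First I would normalize. Since $a\nabla_t b$, $a!_t b$, $a\nabla b$, and $a!b$ are all homogeneous of degree one in $(a,b)$, dividing the desired inequality through by $a$ and setting $c=b/a>0$ reduces it to
$$\frac{1\nabla_t c}{1!_t c}\ge\left(\frac{1\nabla c}{1!c}\right)^{4t(1-t)},$$
because $a\nabla_t b=a\,(1\nabla_t c)$ and $a!_t b=a\,(1!_t c)$, and the same identities at $t=\frac12$ give $K(a,b)=\dfrac{1\nabla c}{1!c}$.

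Next I would invoke Lemma \ref{arith_har_mono}: the function $f(t)=\bigl(\frac{1\nabla_t c}{1!_t c}\bigr)^{1/(t(1-t))}$ is decreasing on $(0,\frac12)$ and increasing on $(\frac12,1)$, so it attains its minimum at $t=\frac12$. A one-line evaluation gives
$$f\!\left(\tfrac12\right)=\left(\frac{1\nabla c}{1!c}\right)^{4},$$
since the exponent there is $1/\bigl(\tfrac12\cdot\tfrac12\bigr)=4$. Thus the inequality $f(t)\ge f(\tfrac12)$ reads $\bigl(\frac{1\nabla_t c}{1!_t c}\bigr)^{1/(t(1-t))}\ge\bigl(\frac{1\nabla c}{1!c}\bigr)^{4}$. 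Because $t(1-t)>0$ on $(0,1)$, raising both sides to the power $t(1-t)$ is order-preserving and yields exactly the normalized inequality above; undoing the normalization gives $(a!_t b)\,K(a,b)^{4t(1-t)}\le a\nabla_t b$.

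The argument is short because Lemma \ref{arith_har_mono} carries all the analytic weight. The only genuine insight is the matching of exponents: recognizing that the minimum value $f(\frac12)$ is precisely the fourth power of the normalized Kantorovich constant, and that re-raising the crude estimate $f(t)\ge f(\frac12)$ to the power $t(1-t)$ converts it into the sharp quadratic exponent $4t(1-t)$ in the statement. I expect no real obstacle beyond bookkeeping; both the homogeneity reduction and the monotonicity-to-minimum step are routine once the lemma is in hand.
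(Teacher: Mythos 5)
Your proposal is correct and follows essentially the same route as the paper: substitute $c=b/a$ into the function of Lemma \ref{arith_har_mono}, use that its minimum is attained at $t=\tfrac12$, and unwind $f(t)\geq f(\tfrac12)$ into the stated inequality. The paper's proof is just a terser version of yours, leaving implicit the homogeneity reduction, the evaluation $f(\tfrac12)=\bigl(\tfrac{1\nabla c}{1!c}\bigr)^{4}$, and the re-exponentiation by $t(1-t)$ that you spell out.
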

\begin{proof}
Let $c=\frac{b}{a}$ in $$f(t)=\left(\frac{1\nabla_{t}c}{1!_{t}c}\right)^{\frac{1}{t(1-t)}}.$$ By Lemma \ref{arith_har_mono}, $f$ attains its minimum at $t=\frac{1}{2}.$ That is $f(t)\geq f(1/2).$ This proves the desired inequality.
\end{proof}
The
 above inequality refines the known inequality \cite{liao} $$(a!_{t}b)K(a,b)^{2r}\leq a\nabla_{t}b, r=\min\{t,1-t\}$$ because $K(a,b)\geq 1$ and $4t(1-t)> 2\min\{t,1-t\}$ when $0<t<1.$

On the other hand, square versions  are as follows.
\begin{lemma}
For $c>1$, define $f:(0,1)\to[0,\infty)$ by $$f(t)=\frac{(1\nabla_{t}c)^2-(1!_{t}c)^2}{t(1-t)}.$$
\begin{enumerate}
\item If $c<1$, then $f$ is decreasing on $(0,1)$ and
\item if $c>1$, then $f$ is increasing on $(0,1)$.
\end{enumerate}
\end{lemma}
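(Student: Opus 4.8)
The plan is to remove the apparent singularity at the endpoints by factoring $f$, and then to reduce the whole claim to a single sign computation. Writing $A:=1\nabla_t c=1-t+tc$ and $B:=1!_t c=\frac{c}{(1-t)c+t}$, I would first record the exact value of the arithmetic--harmonic gap. A short computation, resting on the identity $(1-t+tc)\big((1-t)c+t\big)-c=t(1-t)(c-1)^2$, gives
$$A-B=1\nabla_t c-1!_t c=\frac{t(1-t)(c-1)^2}{(1-t)c+t}.$$
The crucial point is that the factor $t(1-t)$ in this numerator cancels the denominator in the definition of $f$. Using $A^2-B^2=(A-B)(A+B)$, this turns $f$ into the honest rational function
$$f(t)=\frac{(c-1)^2}{(1-t)c+t}\,(A+B),$$
which is free of any singularity on $[0,1]$.

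With $f$ in this closed form I would then differentiate. Setting $D:=(1-t)c+t$ (so that $D>0$ for $c>0$ and $0\le t\le 1$), I expect the computation to collapse to
$$f'(t)=(c-1)^3\,\frac{c^2(1-t)+3c+t}{D^3}.$$
Since $D^3>0$ and the numerator $c^2(1-t)+3c+t$ is a sum of strictly positive terms for $c>0$ and $0<t<1$, the sign of $f'(t)$ is governed entirely by $(c-1)^3$, hence by the sign of $c-1$. This yields exactly the two claimed cases: $f'<0$ throughout $(0,1)$ when $c<1$, so $f$ is decreasing, and $f'>0$ throughout $(0,1)$ when $c>1$, so $f$ is increasing. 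This also explains why, in contrast to the earlier lemmas, the conclusion here is genuine monotonicity on all of $(0,1)$ rather than a split at $t_0=\frac12$: the derivative keeps a constant sign.

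The main obstacle is purely computational, namely carrying out the two algebraic simplifications cleanly. The derivation of $A-B$ is what makes the offending factor $t(1-t)$ disappear; without this cancellation one would be left differentiating a ratio that is only removably singular at the endpoints, and the sign of $f'$ would be far less transparent. After that, the differentiation of $(c-1)^2(A+B)/D$ must be organized so that the leftover numerator is visibly positive; presenting it as $c^2(1-t)+3c+t$, rather than in the expanded form $c^2+3c-tc^2+t$, is what makes the sign manifest and finishes the proof.
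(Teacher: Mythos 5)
Your proof is correct, and it arrives at exactly the derivative identity on which the paper's proof rests: the paper asserts $f'(t)=\frac{(1-c)^3}{((1-t)c+t)^3}\,g(t)$ with $g(t)=-c(3+c)+(c^2-1)t$, and since $g(t)=-\bigl(c^2(1-t)+3c+t\bigr)$ this is precisely your formula $f'(t)=(c-1)^3\bigl(c^2(1-t)+3c+t\bigr)/D^3$. The difference lies in how the derivative is obtained and how its sign is read off. The paper differentiates the original quotient directly (the displayed $f'$ is simply stated, with no intermediate simplification shown) and then proves $g<0$ on $(0,1)$ by a small monotonicity argument: $g'(t)=c^2-1$ has constant sign, so $g$ is bounded by its value at the appropriate endpoint, $g(0)=-c(3+c)<0$ when $c<1$ (and similarly $g(1)=-3c-1<0$ when $c>1$). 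You instead pre-process $f$ using the gap identity $1\nabla_t c-1!_t c=\frac{t(1-t)(c-1)^2}{(1-t)c+t}$, which cancels the $t(1-t)$ in the denominator and exhibits $f$ as a rational function regular on all of $[0,1]$; after that, the quotient-rule computation is routine and the positivity of the cofactor $c^2(1-t)+3c+t$ is visible term by term, with no auxiliary argument needed. Your organization buys hand-verifiability at every step and explains structurally why the $t(1-t)$ causes no trouble, whereas the paper's version is shorter on the page but asks the reader to trust (or recompute) the stated derivative; the underlying mathematics is otherwise the same.
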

\begin{proof}
For the given $f$, $$f'(t)=\frac{(1-c)^3}{((1-t)c+t)^3}g(t),\;{\text{where}}\;
g(t)=-c (3 + c) + (-1 + c^2) t.$$ Then $g'(t)=c^2-1$, which is negative when $c<1$. Thus, if $c<1$, $g(t)\leq g(0)<0$ and hence $f'(t)<0$. That is, $f$ is decreasing when $c<1$. A similar argument implies the other case.
\end{proof}
\begin{corollary}\label{arith_har_square}
Let $a,b>0$ and $0<\nu,\tau<1$. If $(\tau-\nu)(b-a)>0$, then
$$\frac{(a\nabla_{\nu}b)^2-(a!_{\nu}b)^2}{\nu(1-\nu)}\leq \frac{(a\nabla_{\tau}b)^2-(a!_{\tau}b)^2}{\tau(1-\tau)}.$$ The inequality is reversed if $(\tau-\nu)(b-a)<0.$
\end{corollary}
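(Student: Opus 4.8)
The plan is to deduce Corollary \ref{arith_har_square} directly from the monotonicity established in the preceding Lemma, using the homogeneity of the arithmetic and harmonic means. First I would observe that both $a\nabla_t b=(1-t)a+tb$ and $a!_t b=\left((1-t)a^{-1}+tb^{-1}\right)^{-1}$ are homogeneous of degree one in $(a,b)$, so that upon setting $c=\frac{b}{a}$ one has
$$\frac{(a\nabla_t b)^2-(a!_t b)^2}{t(1-t)}=a^2\,\frac{(1\nabla_t c)^2-(1!_t c)^2}{t(1-t)}=a^2 f(t),$$
where $f$ is exactly the function treated in the preceding Lemma. Thus the asserted inequality between the two quotients (evaluated at $t=\nu$ and $t=\tau$) is equivalent, after cancelling the common positive factor $a^2$, to a comparison of the values $f(\nu)$ and $f(\tau)$.

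Next I would translate the hypothesis into the language of $c$ together with the ordering of $\nu$ and $\tau$. Since $a>0$, the sign of $b-a$ coincides with the sign of $c-1$, so the condition $(\tau-\nu)(b-a)>0$ is equivalent to $(\tau-\nu)(c-1)>0$. This splits into two cases. If $c>1$, the hypothesis forces $\nu<\tau$, and the Lemma asserts that $f$ is increasing, whence $f(\nu)\leq f(\tau)$. If $c<1$, the hypothesis instead forces $\tau<\nu$, and the Lemma asserts that $f$ is decreasing, whence again $f(\nu)\leq f(\tau)$. Multiplying through by $a^2>0$ yields the claimed inequality in both cases.

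Finally, the reversed inequality under $(\tau-\nu)(b-a)<0$ follows by the identical argument with all the relevant inequalities reversed: in each of the two cases the hypothesis flips the induced order of $\nu$ and $\tau$, and the monotonicity of $f$ then gives $f(\nu)\geq f(\tau)$. I expect no genuine obstacle here, since the entire analytic content of the result is already contained in the Lemma; the only point requiring care is the sign bookkeeping that matches the two cases $c\lessgtr 1$ with the induced ordering of $\nu$ and $\tau$, so that both cases produce the same direction of inequality.
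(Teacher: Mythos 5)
Your proposal is correct and is exactly the argument the paper intends: the corollary is stated as an immediate consequence of the preceding Lemma, and your substitution $c=\frac{b}{a}$, the degree-two homogeneity of $(a\nabla_t b)^2-(a!_t b)^2$, and the sign case analysis matching $c\lessgtr 1$ with the induced ordering of $\nu,\tau$ is precisely the (omitted) intended proof. No gaps; the sign bookkeeping in both the direct and reversed cases checks out.
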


\begin{remark}
Having introduced our numerical quadratic refinements and reverses, we compare these results with the linear inequalities. We have seen that, for $a,b>0$ and $0\leq t\leq 1,$ one has \cite{liao}
\begin{equation}\label{liao_ineq}
2r(t)(a\nabla b-a!b)\leq a\nabla_t b-a!_t b\leq 2R(t)(a\nabla b-a!b),
\end{equation}
where $r(t)=\min\{t,1-t\}$ and $R(t)=\max\{t,1-t\}.$
On the other hand, under certain ordering conditions, we have the quadratic refinement or reverse
$$a\nabla_t b-a!_t b\leq (\geq) 4t(1-t)(a\nabla b-a!b).$$ It is natural to ask about the advantage of introducing a quadratic refinement or reverse over the linear ones.\\
Direct calculations show that, for $0\leq t\leq 1,$ one has $r(t)\leq 2t(1-t)$ and $R(t)\geq 2t(1-t).$ Therefore, when $(b-a)(2t-1)\geq 0,$ we have
\begin{align*}
a!_tb+2r(t)(a\nabla b-a!b)&\leq a!_tb+4t(1-t)(a\nabla b-a!b)\leq a\nabla_t b,
\end{align*}
which is a refinement of the first inequality in \eqref{liao_ineq}. On the other hand, if $(b-a)(2t-1)\leq 0,$ we have
\begin{align*}
a!_tb+2R(t)(a\nabla b-a!b)&\geq a!_tb+4t(1-t)(a\nabla b-a!b) \geq a\nabla_t b,
\end{align*}
which is a refinement of the second inequality in \eqref{liao_ineq}. Therefore, introducing quadratic refinements serves as introducing one-term refinements of the already existing linear refinements.\\
A similar argument applies for the multiplicative versions.
\end{remark}

\subsection{The geometric-harmonic mean inequality}
We conclude the numerical versions by presenting some refinements and reverses of the geometric-harmonic mean inequality $a!_tb\leq a\#_tb, 0\leq t\leq 1.$
\begin{lemma}\label{geo_har_additive_mono}
For $c>1$, define $f:(0,1)\to [0,\infty)$ by $$f(t)=\frac{1\#_{t}c-1!_{t}c}{t(1-t)}.$$
Then $f$ is increasing on $(0,1).$
\end{lemma}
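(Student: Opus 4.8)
The plan is to show $f'(t)\ge 0$ on $(0,1)$ by direct differentiation, reducing the sign of $f'$ to the nonnegativity of an auxiliary function that vanishes at both endpoints. Write $\phi(t)=1\#_t c-1!_t c=c^{t}-\dfrac{c}{(1-t)c+t}$, so that $f(t)=\dfrac{\phi(t)}{t(1-t)}$. Since the geometric mean dominates the harmonic mean, $\phi\ge 0$ on $[0,1]$, with $\phi(0)=\phi(1)=0$. Differentiating the quotient gives $f'(t)=\dfrac{N(t)}{t^{2}(1-t)^{2}}$ with $N(t)=t(1-t)\phi'(t)-(1-2t)\phi(t)$, so everything reduces to proving $N(t)\ge 0$ on $(0,1)$.

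First I would record the two structural facts that make $N$ tractable. A one-line check gives $N(0)=N(1)=0$ (both summands vanish because $\phi(0)=\phi(1)=0$), and, more usefully, the derivative collapses after cancellation of the $(1-2t)\phi'$ terms to
$$N'(t)=t(1-t)\phi''(t)+2\phi(t),\qquad \phi''(t)=(\ln c)^{2}c^{t}-\frac{2c(c-1)^{2}}{\bigl((1-t)c+t\bigr)^{3}}.$$
Thus $N$ vanishes at the endpoints, and I want to deduce $N\ge 0$ from the shape of $N'$: concretely, I would show that $N'$ is positive near $0$, negative near $1$, and changes sign exactly once on $(0,1)$, so that $N$ increases from $0$ to a single interior maximum and then decreases back to $0$, forcing $N\ge 0$ throughout.

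For the sign analysis it is convenient to isolate the elementary part. Using the exact identity $1\nabla_t c-1!_t c=\dfrac{(c-1)^{2}t(1-t)}{(1-t)c+t}$ (the clean computation behind Proposition \ref{arith_har_num_first}), the quotient $P(t):=\dfrac{1\nabla_t c-1!_t c}{t(1-t)}=\dfrac{(c-1)^{2}}{(1-t)c+t}$ is manifestly increasing for $c>1$, with $P'(t)=\dfrac{(c-1)^{3}}{\bigl((1-t)c+t\bigr)^{2}}$. Writing $f=P-G$ with $G(t)=\dfrac{1\nabla_t c-1\#_t c}{t(1-t)}$, the desired inequality $f'\ge 0$ is exactly $G'(t)\le P'(t)$: the (rational) arithmetic--harmonic gap must spread out in $t$ faster than the arithmetic--geometric gap.

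The main obstacle is precisely this comparison. Unlike its purely rational siblings (the square lemma and Lemma \ref{arith_har_mono}), here $\phi$, $N$ and $N'$ mix the exponential $c^{t}$ coming from $1\#_t c$ with the rational terms coming from $1!_t c$, so $N'$ does not factor into a sign-definite product, and in fact $N$ is not globally concave (one checks $N''$ changes sign), so the quick concavity argument is unavailable. I expect the crux to be establishing the single sign change of $N'$ (equivalently $G'\le P'$). Using $N'(0)=N'(1)=0$ and clearing denominators, the equation $N'(t)=0$ becomes the transcendental relation
$$\bigl((1-t)c+t\bigr)^{3}\bigl[t(1-t)(\ln c)^{2}+2\bigr]=2c^{\,1-t}\bigl[1+(c^{2}-1)(1-t)\bigr],$$
and I would finish by showing the two sides cross exactly once, tracking the monotonicity of the logarithmic difference of the two sides together with the boundary behaviour at $t=0,1$.
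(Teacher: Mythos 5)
Your setup is correct as far as it goes: the reduction of $f'\ge 0$ to $N(t)=t(1-t)\phi'(t)-(1-2t)\phi(t)\ge 0$, the identities $N(0)=N(1)=0$ and $N'(t)=t(1-t)\phi''(t)+2\phi(t)$, and even the cleared form of $N'(t)=0$ all check out (indeed $(c-1)^2t(1-t)+((1-t)c+t)^2=t+(1-t)c^2=1+(c^2-1)(1-t)$, as you claim). But the proof has a genuine gap exactly where you say you "expect the crux" to be: the single sign change of $N'$ is never established, and the finishing move you sketch cannot work as stated. Both sides of your transcendental equation are equal at $t=0$ (both equal $2c^3$) and at $t=1$ (both equal $2$), so the logarithmic difference $D(t)$ of the two sides vanishes at both endpoints; hence $D$ is not monotone, and it can be neither concave nor convex (concavity of $D$ would give $N'\ge 0$ on all of $(0,1)$, which together with $N(0)=N(1)=0$ forces $N\equiv 0$; convexity gives the symmetric contradiction). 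So "tracking the monotonicity of the logarithmic difference" is not a viable route; establishing the single interior crossing requires controlling the sign changes of $D'$, which mixes $\ln c$, rational terms, and a factor $(1-2t)$ that itself changes sign --- this is the whole difficulty of the lemma, and it is left unproven.

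The paper avoids this obstacle entirely by a different decomposition: it writes $f(t)=F(t)G(t)$ with $G(t)=\bigl((1-t)c+t\bigr)^{-1}$ and $F(t)=\dfrac{(1-t)c^{t+1}+tc^{t}-c}{t(1-t)}$. For $c>1$ both factors are nonnegative ($F\ge 0$ is exactly the geometric--harmonic inequality) and $G$ is trivially increasing, so it suffices to show $F$ is increasing. Writing $F'(t)=g(c)/\bigl(t^2(1-t)^2\bigr)$, the paper then differentiates $g$ \emph{with respect to $c$} rather than $t$: one checks $g''(c)\ge 0$, $g'(1)=0$, $g(1)=0$, whence $g(c)\ge 0$ for $c>1$. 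Anchoring all signs at $c=1$, where everything vanishes identically, converts the transcendental comparison in $t$ into a chain of elementary sign checks in $c$. If you want to salvage your approach, you would need an argument of comparable strength for the single-crossing claim; alternatively, switching the differentiation variable to $c$ in your own framework (i.e., showing your $N(t)$, viewed as a function of $c$ with $t$ fixed, is nonnegative by successive $c$-derivatives vanishing at $c=1$) is the natural repair.
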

\begin{proof}
Notice first that $f(t)=F(t)G(t)$ where $$F(t)=\frac{(1-t)c^{t+1}+tc^{t}-c}{t(1-t)}\;{\text{and}}\;G(t)=\frac{1}{(1-t)c+t}.$$ Clearly, $G$ is decreasing if $c<1$ and is increasing if $c>1.$ The main calculations are for $F$. Notice that
$$F'(t)=\frac{g(c)}{t^2(1-t)^2},$$ where
$$g(c)=c - 2 c\; t + c^t (-c (-1 + t)^2 + t^2 + (c (-1 + t) - t) (-1 + t) t \log c).$$ Then,
$$g'(c)=\frac{(c - 2 c t +
 c^t (-c (-1 + t)^2 + t^2 + (-1 + t) t (-c + (-1 + c) t^2) \log c))}{c}$$ and
$$g''(c)=c^{-2 + t} (-1 + t)^2 t^2 h(t)\;{\text{where}}\; h(t)=-1 + c + (c + (-1 + c) t) \log c.$$ Moreover,
$h'(t)=(c-1)\log c>0.$ Since $h'(t)\geq 0, h(t)\geq h(0)= -1 + c + c \log c\geq 0 $ when $c>1$. Consequently,  $ g''(c)\geq 0, g'(c)\geq g'(1)=0 $ and $g(c)\geq g(1)=0.$ Therefore, $F'(t)\geq 0 $  and  $F$ is increasing.

 This completes the proof.
\end{proof}

Now Lemma \ref{geo_har_additive_mono} entails the following comparison between the geometric and harmonic means.
\begin{corollary}\label{geo_har_add}
Let $a,b>0$ and $0<\nu,\tau<1.$ If $(\tau-\nu)(b-a)>0$ then
$$\frac{a\#_{\nu}b-a!_{\nu}b}{\nu(1-\nu)}\leq \frac{a\#_{\tau}b-a!_{\tau}b}{\tau(1-\tau)}.$$
\end{corollary}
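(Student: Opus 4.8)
The plan is to deduce this monotonicity-in-the-weight statement from Lemma \ref{geo_har_additive_mono} by exploiting the homogeneity of the means. First I would set $c=b/a$ and record the elementary normalization identities $a\#_t b=a\,(1\#_t c)$ and $a!_t b=a\,(1!_t c)$, which follow at once from $a\#_t b=a^{1-t}b^{t}$ and $a!_t b=((1-t)a^{-1}+tb^{-1})^{-1}$. Consequently
$$\frac{a\#_{\nu}b-a!_{\nu}b}{\nu(1-\nu)}=a\,f(\nu)\quad\text{and}\quad\frac{a\#_{\tau}b-a!_{\tau}b}{\tau(1-\tau)}=a\,f(\tau),$$
where $f$ is exactly the function from Lemma \ref{geo_har_additive_mono} (with the given $c$). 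Since $a>0$, the claimed inequality is equivalent to $f(\nu)\le f(\tau)$, so the whole statement reduces to comparing two values of $f$.

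Next I would unravel the hypothesis $(\tau-\nu)(b-a)>0$ into its two sign cases. If $b>a$ and $\tau>\nu$, then $c>1$, and Lemma \ref{geo_har_additive_mono} gives that $f$ is increasing on $(0,1)$; hence $\nu<\tau$ forces $f(\nu)\le f(\tau)$, which is precisely what we want. The opposite case $b<a$, $\tau<\nu$ gives $c<1$, which is \emph{not} covered directly by the lemma, and this is where the real work lies.

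The main obstacle, then, is the case $c<1$, which I would resolve by a symmetry reduction rather than by redoing the derivative computation in Lemma \ref{geo_har_additive_mono}. Writing $D(t,c)=1\#_t c-1!_t c=c^{t}-((1-t)+tc^{-1})^{-1}$, a short algebraic check should yield the relation $D(t,c)=c\,D(1-t,1/c)$, so that the associated function satisfies $f_c(t)=c\,f_{1/c}(1-t)$. When $c<1$ we have $1/c>1$, so Lemma \ref{geo_har_additive_mono} applies to $1/c$ and makes $f_{1/c}$ increasing; composing with $t\mapsto 1-t$ then shows that $f_c$ is \emph{decreasing} on $(0,1)$. In this case $\nu>\tau$ again gives $f_c(\nu)\le f_c(\tau)$, completing the argument. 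I expect the only genuinely delicate point to be verifying the identity $D(t,c)=c\,D(1-t,1/c)$ cleanly, after which both sign cases collapse to a single application of Lemma \ref{geo_har_additive_mono}.
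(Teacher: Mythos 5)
Your proof is correct and follows the paper's intended route: the paper states Corollary \ref{geo_har_add} as an immediate consequence of Lemma \ref{geo_har_additive_mono} with no written argument, and your homogeneity reduction $a\#_t b=a\,(1\#_t c)$, $a!_t b=a\,(1!_t c)$ followed by an appeal to the lemma is exactly that deduction. Your symmetry identity $D(t,c)=c\,D(1-t,1/c)$ --- which is just the mean symmetry $a\#_t b=b\#_{1-t}a$ and $a!_t b=b!_{1-t}a$ --- is verified correctly and neatly supplies the $c<1$ case, a detail the lemma does not cover and the paper leaves implicit.
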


\begin{lemma}
For $c>0,$ define f$:(0,1)\to [0,\infty)$ by
$$f(t)=\left(\frac{1\#_{t}c}{1!_{t}c}\right)^{\frac{1}{t(1-t)}}.$$ Then
\begin{enumerate}
\item $f$ is decreasing on $(0,1)$ if $c<1$ and
\item $f$ is increasing on $(0,1)$ if $c>1.$
\end{enumerate}
\end{lemma}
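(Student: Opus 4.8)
The plan is to pass to logarithms and reduce the claim to a single auxiliary function of $c$ whose sign is controlled by its value at $c=1$, exactly in the spirit of Lemmas \ref{arith_har_mono} and \ref{geo_har_additive_mono}. Since $1\#_t c=c^t$ and $1!_t c=\frac{c}{(1-t)c+t}$, the quotient inside $f$ equals $c^{t-1}((1-t)c+t)$, so that
$$F(t):=\log f(t)=\frac{(t-1)\log c+\log((1-t)c+t)}{t(1-t)}.$$
Because $x\mapsto\log x$ is increasing, $f$ is decreasing (resp. increasing) exactly when $F$ is, and so it suffices to determine the sign of $F'$ on $(0,1)$ in the two cases $c<1$ and $c>1$.

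First I would differentiate $F$. Writing $N(t)=(t-1)\log c+\log((1-t)c+t)$ for the numerator, the quotient rule gives
$$F'(t)=\frac{N'(t)\,t(1-t)-N(t)(1-2t)}{t^2(1-t)^2},$$
so the sign of $F'(t)$ is the sign of $P(t):=N'(t)\,t(1-t)-N(t)(1-2t)$. Setting $u=(1-t)c+t$ and collecting the $\log c$ terms (whose coefficient should simplify to $(1-t)^2$), I expect $P$ to take the compact form
$$P(t)=(1-t)^2\log c-\frac{t(1-t)(c-1)}{u}-(1-2t)\log u.$$

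The decisive step is to freeze $t$ and treat $P$ as a function $g(c)$ of $c$, reading off its sign from the base point $c=1$. Substituting $c=1$ gives $\log c=0$, $u=1$ and $c-1=0$, whence $g(1)=0$. I would then differentiate in $c$, using $\frac{d}{dc}u=1-t$ together with the cancellation $u-(c-1)(1-t)=1$ — which forces $\frac{d}{dc}\frac{c-1}{u}=\frac{1}{u^2}$ — to obtain
$$g'(c)=\frac{(1-t)^2}{c}-\frac{t(1-t)}{u^2}-\frac{(1-2t)(1-t)}{u}.$$
Placing this over the common denominator $c\,u^2$, I anticipate the numerator to collapse to a perfect square, yielding
$$g'(c)=\frac{t^2(1-t)^2(c-1)^2}{c\,u^2}\ge 0.$$
Hence $g$ is increasing on $(0,\infty)$ with $g(1)=0$, so $g(c)<0$ for $c<1$ and $g(c)>0$ for $c>1$. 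Translating back through $F'=P/(t^2(1-t)^2)$ shows $F$ is decreasing when $c<1$ and increasing when $c>1$, which are precisely the two assertions.

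The main obstacle is purely computational: verifying that the numerator of $g'(c)$ over $c\,u^2$ is indeed $t^2(1-t)^2(c-1)^2$. This hinges on the clean identity $u-(c-1)(1-t)=1$ and on the coefficient of $\log c$ in $P$ reducing to $(1-t)^2$; getting these two simplifications right is exactly what makes the perfect-square factorization appear, and with it the entire argument becomes a one-line sign deduction from $g(1)=0$. Everything else follows the template already used for the arithmetic--harmonic and additive geometric--harmonic lemmas.
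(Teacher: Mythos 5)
Your proof is correct; I checked the two identities it rests on. The coefficient of $\log c$ in $P(t)$ is $t(1-t)-(t-1)(1-2t)=(1-t)^2$, and with $u=(1-t)c+t$ the numerator of $g'(c)$ over the common denominator $c\,u^2$, namely $(1-t)^2u^2-t(1-t)c-(1-2t)(1-t)cu$, does collapse to $t^2(1-t)^2(c-1)^2$ (the cancellation $u-(c-1)(1-t)=1$ that you invoke is also right). Hence $g'\ge 0$ on $(0,\infty)$, $g(1)=0$, so the sign of $F'(t)$ equals the sign of $c-1$ uniformly in $t\in(0,1)$, which is exactly the two assertions of the lemma. The steps you phrase as expectations are therefore genuine facts, not gaps.

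Your route follows the same outline as the paper's proof --- pass to $F=\log f$, differentiate in $t$, then freeze $t$ and control an auxiliary function of $c$ anchored at the root $c=1$ --- but the execution is genuinely simpler. The paper normalizes differently, writing $F'(t)=-g(c)/((1-t)c+t)$, and its resulting $g$ does not have a sign-definite first derivative: the paper must descend through a chain $g'\,$, $h$, $h'$, $h''$, introducing yet another function $k(c)=((1-t)c+t)^2+1-2t$, and then chase the signs back up through a two-column case table separating $c<1$ from $c>1$. By keeping the full factor $t^2(1-t)^2$ in the denominator of $F'$, your auxiliary function $g(c)=P(t)$ has a derivative that is a perfect square over $c\,u^2$, so a single differentiation in $c$ settles both monotonicity claims simultaneously, with no case analysis. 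The only cost is the algebraic verification of the perfect-square factorization, which, as noted above, checks out.
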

\begin{proof}
Let $F(t)=\log f(t).$ Then
$$F(t)=\frac{t\log c+\log(1-t+t c^{-1})}{t(1-t)},$$
and
$$F'(t)=-\frac{g(c)}{(1-t)c+t},$$ where $$g(c)=t (-1 + c + t - c t) + (c (-1 + t) -
    t) \left[t^2 \log c + (-1 + 2 t) \log(1 + (-1 + c^{-1}) t)\right].$$
    Now
    $$g'(c)=\frac{t-1}{c}h(c),$$ where $$h(c)=(-1 + c) (-1 + t) t + c\;t^2 \log c +
 c (-1 + 2 t) \log(1 + (-1 + c^{-1}) t).$$ Moreover,
 $$h'(c)=t^2\log c+(2t-1)\left[\frac{(1-c)(t-1)t}{(1-t)c+t}+\log(1-t+t c^{-1})\right]$$ and
 $$h''(c)=\frac{t^2\;k(c)}{c((1-t)c+t)^2},\;k(c)=((1-t)c+t)^2+1-2t.$$
 Now
 $k'(c)=2(1-t)((1-t)c+t)>0$ and $k$ is increasing in $c>0.$ The following table summarizes the conclusion.
\begin{center}
\begin{tabular}{|c|c|}
  \hline
  $c<1$ & $c>1$ \\
  \hline
  $k(c)\geq k(0)=(t-1)^2\geq 0$.  &   $k(c)\geq k(1)=2-2t\geq 0$\\
 $\Rightarrow h''(c)\geq 0 \Rightarrow h'(c)\leq h'(1)=0$ &$\Rightarrow h''(c)\geq 0\Rightarrow h'(c)\geq h'(1)=0$    \\
 $\Rightarrow h(c)\geq h(1)=0\Rightarrow g'(c)\leq 0$  &  $\Rightarrow h(c)\geq h(1)=0\Rightarrow g'(c)\leq 0$   \\
 $\Rightarrow g(c)\geq g(1)=0\Rightarrow F'(t)<0$ &  $\Rightarrow g(c)\leq g(1)=0\Rightarrow F'(t)\geq 0$\\
 $\Rightarrow f$ is decreasing.& $\Rightarrow f$ is increasing.\\
  \hline
\end{tabular}
\end{center}
 This completes the proof.
\end{proof}
\begin{corollary}\label{geo_har_cor_num}
Let $a,b>0$ and $0<\nu,\tau<1.$ If $(b-a)(\tau-\nu)\geq 0,$ then
$$a\#_{\tau}b\geq a!_{\tau}b\left(\frac{a\#_{\nu}b}{a!_{\nu}b}\right)^{\frac{\tau(1-\tau)}{\nu(1-\nu)}}.$$ On the other hand, if
$(b-a)(\tau-\nu)\leq 0,$ then
$$a\#_{\tau}b\leq a!_{\tau}b\left(\frac{a\#_{\nu}b}{a!_{\nu}b}\right)^{\frac{\tau(1-\tau)}{\nu(1-\nu)}}.$$
\end{corollary}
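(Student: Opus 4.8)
The plan is to deduce this corollary directly from the monotonicity recorded in the immediately preceding lemma, after a homogeneity reduction. First I would normalize by setting $c=b/a$ and invoking the scaling identities $a\#_t b=a(1\#_t c)$ and $a!_t b=a(1!_t c)$, which follow at once from $a\#_t b=a^{1-t}b^t=a\,c^t$ and $a!_t b=\left((1-t)a^{-1}+t b^{-1}\right)^{-1}=a\left((1-t)+t c^{-1}\right)^{-1}$. In particular the quotient $\frac{a\#_t b}{a!_t b}$ equals $\frac{1\#_t c}{1!_t c}$ and is unaffected by the normalization. Thus it suffices to compare $f(\tau)$ with $f(\nu)$, where $f(t)=\left(\frac{1\#_t c}{1!_t c}\right)^{\frac{1}{t(1-t)}}$ is exactly the function whose monotonicity the lemma supplies: $f$ is increasing on $(0,1)$ when $c>1$ (that is, $b>a$) and decreasing when $c<1$ (that is, $b<a$).

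Next I would unwind the hypothesis $(b-a)(\tau-\nu)\geq 0$ into the two regimes it allows and check that both force $f(\tau)\geq f(\nu)$. If $b>a$, then $c>1$ and $f$ is increasing, while the sign condition gives $\tau\geq\nu$, whence $f(\tau)\geq f(\nu)$; if $b<a$, then $c<1$ and $f$ is decreasing, while the sign condition now gives $\tau\leq\nu$, so again $f(\tau)\geq f(\nu)$. The degenerate situations $a=b$ or $\tau=\nu$ are trivial. The complementary hypothesis $(b-a)(\tau-\nu)\leq 0$ is handled by the mirror-image bookkeeping and yields $f(\tau)\leq f(\nu)$.

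Finally I would convert $f(\tau)\geq f(\nu)$ into the stated form. Writing the inequality $f(\tau)\geq f(\nu)$ out in full and raising both sides to the positive power $\tau(1-\tau)$, which preserves the order, collapses the outer exponent on the left and leaves
$$\frac{1\#_\tau c}{1!_\tau c}\geq \left(\frac{1\#_\nu c}{1!_\nu c}\right)^{\frac{\tau(1-\tau)}{\nu(1-\nu)}}.$$
Multiplying through by the positive quantity $a(1!_\tau c)=a!_\tau b$ and applying the scaling identities once more recovers precisely $a\#_\tau b\geq a!_\tau b\left(\frac{a\#_\nu b}{a!_\nu b}\right)^{\frac{\tau(1-\tau)}{\nu(1-\nu)}}$; the reversed inequality follows identically from $f(\tau)\leq f(\nu)$.

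Since the entire analytic content of the result is carried by the lemma, I do not anticipate a genuine obstacle. The only point demanding care is the case bookkeeping in the second step: one must verify that the single hypothesis $(b-a)(\tau-\nu)\geq 0$ aligns the ordering of $\tau$ and $\nu$ with the correct direction of monotonicity of $f$ in each of its two regimes, so that both subcases terminate at the common conclusion $f(\tau)\geq f(\nu)$.
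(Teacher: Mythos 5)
Your proof is correct and is essentially the paper's intended argument: the paper states this corollary immediately after the lemma on the monotonicity of $f(t)=\left(\frac{1\#_{t}c}{1!_{t}c}\right)^{\frac{1}{t(1-t)}}$ without a written proof, precisely because the deduction is the normalization $c=b/a$ followed by the case analysis and unwinding you describe (the same pattern used explicitly in the proof of Corollary \ref{arith_har_kanto_cor}). Your handling of the degenerate cases $a=b$ and $\tau=\nu$, and the observation that raising to the positive power $\tau(1-\tau)$ preserves the order, are exactly the right bookkeeping.
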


\section{Applications in $\mathbb{M}_n$}
Our matrix results fall into two sections. The first section presents results that we obtain from the above numerical results. Then we present the other type, which is independent from the above numerical results.
\subsection{Quadratic results}
\subsubsection{Heinz-type inequalities}
Following our matrix notations from the introduction, we define the weighted harmonic and arithmetic means for $A,B\in\mathbb{M}_n^{++}$ as follows
$$A!_t B=((1-t)A^{-1}+t B^{-1})^{-1}\;{\text{and}}\;A\nabla_t B=(1-t)A+t B, 0\leq t\leq 1,$$ with the convention that $A!B=A!_{\frac{1}{2}}B$ and $A\nabla B=A\nabla_{\frac{1}{2}}B.$ Moreover, we define the harmonic Heinz matrix means by
$$!_t(A,B)=\frac{A!_tB+A!_{1-t}B}{2}, A,B\in\mathbb{M}_n^{++}, 0\leq t\leq 1.$$

Among the strongest comparisons between Hermitian  matrices is the so called L\"{o}wener partial ordering $\leq$, where we write $A\leq B$ when $B-A\in\mathbb{M}_n^{+}$. Recall that this partial ordering is preserved under conjugation. That is, if $A$ and $B$ are Hermitian such that $A\leq B$, then $CAC^*\leq CBC^*$ for any $C\in\mathbb{M}_n$.

As mentioned earlier,  the Heinz-type inequality
\begin{equation}\label{heinz_type_har_applications}
 a!b\leq !_t(a,b)\leq \frac{a+b}{2}, a,b>0, 0\leq t\leq 1
\end{equation}
 can be easily proved. This entails the following matrix version, in which the notation $D(\lambda_j)$ will mean the diagonal matrix whose diagonal entries are $\{\lambda_j\}.$ The proof is based on a standard functional calculus argument, that we present for completeness. Moreover, all forthcoming results about L\"{o}wener partial ordering results are proved similarly. Hence, we present these results without proofs.
\begin{proposition}\label{first_matrix}
 Let $A,B\in\mathbb{M}_n^{++}$ and let $0\leq t\leq 1.$ Then
$$A!B\leq !_t(A,B)\leq A\nabla B.$$
\end{proposition}
\begin{proof}
We present the proof of the second inequality. Let $X=A^{-\frac{1}{2}}BA^{-\frac{1}{2}}.$ Then $X\in\mathbb{M}_n^{++}$. If we denote the eigenvalues of $X$ by $\{\lambda_j\}$, we have $\lambda_j>0$, since $A\in\mathbb{M}_n^{++}$. Since $\lambda_j>0,$ \eqref{heinz_type_har_applications} implies
$!_t(1,\lambda_j)\leq 1\nabla \lambda_j$ for each $1\leq j\leq n.$ But then
\begin{equation}\label{needed_1_matrix}
 D(!_t(1,\lambda_j))\leq D(1\nabla \lambda_j).
\end{equation}
 Now since $X\in\mathbb{M}_n^{++}$, it follows that $X=UD(\lambda_j)U^*$ for some unitary matrix $U.$ Conjugating \eqref{needed_1_matrix} with $U$ implies $!_t(I,X)\leq I\nabla X,$ where $I$ is the identity matrix in $\mathbb{M}_n$. Then conjugating this last inequality with $A^{\frac{1}{2}}$ implies the desired inequality.
\end{proof}

A quadratic refinement of the above inequality maybe obtained using the same argument  of Proposition \ref{first_matrix} applied to Corollary \ref{full_comp_heinz} as follows.
\begin{theorem}
 Let $A,B\in\mathbb{M}_n^{++}$ and let $0\leq \tau, t\leq 1.$ Then
$$!_t(A,B)+\frac{t(1-t)}{\tau(1-\tau)}\left[A\nabla B-!_{\tau}(A,B)\right]\leq A\nabla B$$ when $t\leq r(\tau)$ or $t\geq R(\tau).$ On the other hand, if $r(\tau)\leq t\leq R(\tau),$ the inequality is reversed.
\end{theorem}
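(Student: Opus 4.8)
The plan is to mimic the functional-calculus argument of Proposition \ref{first_matrix}, transporting the scalar comparison of Corollary \ref{full_comp_heinz} to matrices one eigenvalue at a time. The first step is to reduce the general case $A,B\in\mathbb{M}_n^{++}$ to the case $A=I$ by congruence. Setting $C=A^{-\frac{1}{2}}$ and $X=A^{-\frac{1}{2}}BA^{-\frac{1}{2}}\in\mathbb{M}_n^{++}$, I would check that each of the three means appearing in the statement is covariant under the congruence $Y\mapsto CYC^*$: this is immediate for $A\nabla B$ since it is an affine combination, and for $A!_tB=((1-t)A^{-1}+tB^{-1})^{-1}$ it follows from $(CAC^*)^{-1}=C^{-*}A^{-1}C^{-1}$ together with factoring out $C^{-*}(\cdot)C^{-1}$ and inverting; the harmonic-Heinz mean $!_t(A,B)$ inherits covariance as an average of two such terms. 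Because congruence preserves the L\"owner order (as recalled before Proposition \ref{first_matrix}), the stated inequality is equivalent to
$$!_t(I,X)+\frac{t(1-t)}{\tau(1-\tau)}\left[I\nabla X-!_{\tau}(I,X)\right]\leq I\nabla X.$$

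Next I would diagonalize $X=UD(\lambda_j)U^*$ with $\lambda_j>0$. When one argument is $I$, every mean in play becomes an ordinary scalar function applied in the eigenbasis: $!_t(I,X)=UD(!_t(1,\lambda_j))U^*$, $I\nabla X=UD(1\nabla\lambda_j)U^*$, and likewise for $!_{\tau}(I,X)$. Hence the displayed matrix inequality holds if and only if the corresponding scalar inequality
$$!_t(1,\lambda_j)+\frac{t(1-t)}{\tau(1-\tau)}\left[1\nabla\lambda_j-!_{\tau}(1,\lambda_j)\right]\leq 1\nabla\lambda_j$$
holds for every $j$, which is exactly \eqref{harmonic_less} (i.e.\ Corollary \ref{full_comp_heinz}) applied with $a=1,\,b=\lambda_j$ when $t\leq r(\tau)$ or $t\geq R(\tau)$, and with the reversed inequality when $r(\tau)\leq t\leq R(\tau)$. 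Conjugating the resulting diagonal comparison by $U$ and then by $A^{\frac{1}{2}}$ (each preserving the order) returns the claim.

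The step I expect to carry the real weight is not a computation but the observation that the scalar comparison in Corollary \ref{full_comp_heinz} is \emph{uniform in $b$}: its direction depends only on the location of $t$ relative to $r(\tau)$ and $R(\tau)$, and not on the sign of $b-a$. This uniformity is what lets the one-eigenvalue-at-a-time argument succeed, since the eigenvalues $\lambda_j$ may lie on either side of $1$ yet must all yield inequalities pointing the same way. By contrast, for the plain arithmetic-harmonic quadratic comparison (Proposition \ref{arith_har_num_first}) the direction flips with the sign of $(b-a)(\tau-\nu)$, which is precisely why no such clean L\"owner-order matrix version is available there; the symmetry $!_t=!_{1-t}$ of the harmonic-Heinz mean is what removes this obstruction here. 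A minor point to record is that the division forces $\tau\in(0,1)$, so the endpoints $\tau\in\{0,1\}$ are tacitly excluded.
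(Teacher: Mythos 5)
Your proposal is correct and is exactly the argument the paper intends: the paper states that this theorem follows by "the same argument of Proposition \ref{first_matrix} applied to Corollary \ref{full_comp_heinz}," which is precisely your congruence-reduction to $A=I$, diagonalization of $X=A^{-\frac{1}{2}}BA^{-\frac{1}{2}}$, and eigenvalue-wise application of the scalar inequality \eqref{harmonic_less}. Your added observations—that the argument works because the direction of the scalar comparison is uniform in $b$ (unlike Proposition \ref{arith_har_num_first}), and that $\tau\in\{0,1\}$ must be tacitly excluded—are both correct and go slightly beyond what the paper records.
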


In particular, when $\tau=\frac{1}{2},$ we obtain the following simpler form.
\begin{corollary}
 Let $A,B\in\mathbb{M}_n^{++}$ and let $0\leq t\leq 1.$ Then
$$!_t(A,B)+4t(1-t)\left[A\nabla B-A!B\right]\leq A\nabla B.$$
\end{corollary}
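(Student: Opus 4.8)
The plan is to obtain this Corollary as the special case $\tau=\frac{1}{2}$ of the preceding Theorem, so the work is entirely in checking that the specialization is legitimate and then simplifying. First I would set $\tau=\frac{1}{2}$ and verify that the hypothesis of the Theorem holds for every $t\in[0,1]$: since $r\left(\frac{1}{2}\right)=R\left(\frac{1}{2}\right)=\frac{1}{2}$, the requirement ``$t\leq r(\tau)$ or $t\geq R(\tau)$'' reads ``$t\leq\frac{1}{2}$ or $t\geq\frac{1}{2}$'', which is satisfied for all admissible $t$. Hence the non-reversed inequality of the Theorem is available on the whole interval, with no need to track the reversal regime.

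Next I would simplify the two $\tau$-dependent quantities. The coefficient becomes $\frac{t(1-t)}{\tau(1-\tau)}=\frac{t(1-t)}{1/4}=4t(1-t)$, while the harmonic-Heinz term collapses by definition to $!_{1/2}(A,B)=\frac{A!_{1/2}B+A!_{1/2}B}{2}=A!B$. Substituting these into the conclusion of the Theorem reproduces exactly $!_t(A,B)+4t(1-t)\left[A\nabla B-A!B\right]\leq A\nabla B$, which is the claim.

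As an independent check that the statement is correct rather than a typographical accident, I would also reprove it directly by the functional-calculus route of Proposition \ref{first_matrix}, now applied to the numerical Corollary \ref{heron_harm_cor}. Writing $X=A^{-1/2}BA^{-1/2}\in\mathbb{M}_n^{++}$ with eigenvalues $\{\lambda_j\}$, Corollary \ref{heron_harm_cor} supplies the scalar inequality $!_t(1,\lambda_j)+4t(1-t)(1\nabla\lambda_j-1!\lambda_j)\leq 1\nabla\lambda_j$ for each $j$. Each matrix mean appearing here is a function of $X$ alone (one argument being $I$), so the diagonal inequality $D\left(!_t(1,\lambda_j)+4t(1-t)(1\nabla\lambda_j-1!\lambda_j)\right)\leq D(1\nabla\lambda_j)$ can be conjugated by the unitary diagonalizing $X$, and then by $A^{1/2}$; since congruence $Y\mapsto A^{1/2}YA^{1/2}$ preserves the L\"{o}wner order and sends $!_t(I,X)\mapsto !_t(A,B)$, $I!X\mapsto A!B$, and $I\nabla X\mapsto A\nabla B$, the desired matrix inequality follows.

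There is no genuine obstacle here: both routes are routine once the Theorem (equivalently, Corollary \ref{heron_harm_cor}) is in hand. The only point demanding attention is the one flagged above, namely confirming that $\tau=\frac{1}{2}$ lands in the refinement regime of the Theorem for \emph{all} $t$ rather than partly in the reversed regime. This is precisely the feature that makes $\tau=\frac{1}{2}$ yield a clean, unconditional refinement, whereas a general $\tau$ partitions $[0,1]$ into a refinement zone and a reversal zone.
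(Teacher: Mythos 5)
Your proposal is correct and matches the paper's approach exactly: the paper obtains this corollary precisely by specializing the preceding Theorem at $\tau=\frac{1}{2}$, where $r(\tau)=R(\tau)=\frac{1}{2}$ makes the hypothesis vacuous, $\frac{t(1-t)}{\tau(1-\tau)}=4t(1-t)$, and $!_{1/2}(A,B)=A!B$. Your supplementary functional-calculus verification via Corollary \ref{heron_harm_cor} is also sound, but it is not needed beyond the specialization.
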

Determinants inequalities can be obtained as well, recalling two facts
\begin{itemize}
 \item If $A\in\mathbb{M}_n^{+}$ has eigenvalues $\{\lambda_i(A)\}$, then
\begin{equation}\label{det_eigen_needed}
 \det A=\prod_{i=1}^{n}\lambda_i(A).
\end{equation}
\item Minkowski inequality which states that when $\{a_i\}$ and $\{b_i\}$ are two sets of positive numbers, we have
\begin{equation}\label{minkowski}
\left(\prod_{i=1}^{n}a_i\right)^{\frac{1}{n}}+\left(\prod_{i=1}^{n}b_i\right)^{\frac{1}{n}}\leq \left(\prod_{i=1}^{n}(a_i+b_i)\right)^{\frac{1}{n}}.
\end{equation}
\end{itemize}

\begin{theorem}\label{det_thm}
 Let $A,B\in\mathbb{M}_n^{++}$ and let $0\leq \tau,t\leq 1.$ If $t\leq r(\tau)$ or $t\geq R(\tau),$ then
$$\det(A\nabla B)^{\frac{1}{n}}\geq \det(!_t(A,B))^{\frac{1}{n}}+\frac{t(1-t)}{\tau(1-\tau)}\det(A\nabla B-!_{\tau}(A,B))^{\frac{1}{n}}.$$
\end{theorem}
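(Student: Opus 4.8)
The plan is to combine the Löwner-order inequality from the theorem immediately preceding with the monotonicity of the determinant and the Minkowski inequality \eqref{minkowski}. Throughout I fix the hypothesis that $t\leq r(\tau)$ or $t\geq R(\tau)$, and abbreviate $P=!_t(A,B)$, $S=A\nabla B-!_{\tau}(A,B)$ and $c=\frac{t(1-t)}{\tau(1-\tau)}>0$. By Proposition \ref{first_matrix} we have $!_{\tau}(A,B)\leq A\nabla B$, so $S\in\mathbb{M}_n^{+}$ and hence $Q:=cS\in\mathbb{M}_n^{+}$; moreover $P\in\mathbb{M}_n^{++}$ because $A,B\in\mathbb{M}_n^{++}$, so $P+Q\in\mathbb{M}_n^{++}$ is invertible. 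Under the stated constraint on $t,\tau$, the preceding theorem gives exactly $P+Q\leq A\nabla B$ in the Löwner order.

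First I would pass from this Löwner inequality to determinants. Since $0<P+Q\leq A\nabla B$, conjugating by $(P+Q)^{-1/2}$ yields $I\leq (P+Q)^{-1/2}(A\nabla B)(P+Q)^{-1/2}$, so every eigenvalue of the right-hand matrix is at least $1$; by \eqref{det_eigen_needed} its determinant, namely $\det(A\nabla B)\det(P+Q)^{-1}$, is then $\geq 1$. Hence $\det(A\nabla B)\geq \det(P+Q)$, and taking $n$-th roots gives $\det(A\nabla B)^{1/n}\geq \det(P+Q)^{1/n}$.

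It then remains to prove the determinantal Minkowski bound $\det(P+Q)^{1/n}\geq \det(P)^{1/n}+\det(Q)^{1/n}$, and this is where the two recalled facts enter. Since $P\in\mathbb{M}_n^{++}$, I would write $\det(P+Q)=\det(P)\det\!\bigl(I+P^{-1/2}QP^{-1/2}\bigr)$ and diagonalize $P^{-1/2}QP^{-1/2}=U\,D(\mu_i)\,U^{*}$ with all $\mu_i\geq 0$. Then \eqref{det_eigen_needed} gives $\det(P+Q)=\det(P)\prod_i(1+\mu_i)$ and $\det(Q)=\det(P)\prod_i\mu_i$. Applying the scalar Minkowski inequality \eqref{minkowski} with $a_i=1$ and $b_i=\mu_i$ gives $\prod_i(1+\mu_i)^{1/n}\geq 1+\prod_i\mu_i^{1/n}$, and multiplying through by $\det(P)^{1/n}$ yields $\det(P+Q)^{1/n}\geq \det(P)^{1/n}+\det(Q)^{1/n}$. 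Finally $\det(Q)=c^{n}\det(S)$ gives $\det(Q)^{1/n}=c\,\det(S)^{1/n}=\frac{t(1-t)}{\tau(1-\tau)}\det(A\nabla B-!_{\tau}(A,B))^{1/n}$, and substituting back delivers the claim.

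The main obstacle I anticipate is precisely the bridge in the last paragraph: the excerpt supplies only the \emph{scalar} Minkowski inequality \eqref{minkowski}, not its matrix-determinant form, so the congruence-and-diagonalization reduction by $P^{-1/2}(\cdot)P^{-1/2}$ must be written out explicitly to legitimately feed the eigenvalue families $\{1\}$ and $\{\mu_i\}$ into \eqref{minkowski}. A minor point to handle is that $S$ may be singular, so that $\det(Q)=0$; in that case the inequality is merely weaker and the argument above still applies verbatim, since only $\mu_i\geq 0$ is used. Everything else — positivity of $P$ and $Q$, invertibility of $P+Q$, and the determinant-monotonicity step — is routine once $P+Q\leq A\nabla B$ is in hand.
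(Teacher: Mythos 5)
Your proof is correct, but it takes a genuinely different route from the paper's. You factor the argument through the L\"{o}wner-order refinement stated just before Theorem \ref{det_thm} (namely $!_t(A,B)+\frac{t(1-t)}{\tau(1-\tau)}\left[A\nabla B-!_{\tau}(A,B)\right]\leq A\nabla B$), and then apply two general determinant facts: monotonicity of the determinant under the L\"{o}wner order, and the matrix Minkowski inequality $\det(P+Q)^{1/n}\geq \det(P)^{1/n}+\det(Q)^{1/n}$, which you correctly derive from the scalar inequality \eqref{minkowski} via the congruence $\det(P+Q)=\det(P)\det\bigl(I+P^{-1/2}QP^{-1/2}\bigr)$ and diagonalization; your handling of the degenerate case $\mu_i=0$ is also fine, since the scalar bound becomes trivial there. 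The paper instead never invokes the L\"{o}wner-order theorem at all: it conjugates by $A^{-1/2}$ so that every matrix in sight ($I\nabla X$, $!_t(I,X)$, $!_{\tau}(I,X)$ with $X=A^{-1/2}BA^{-1/2}$) is a function of the single matrix $X$, hence simultaneously diagonalizable; the scalar Corollary \ref{full_comp_heinz} is then applied eigenvalue by eigenvalue, the scalar Minkowski inequality \eqref{minkowski} splits the resulting product, the products are recognized as determinants via \eqref{det_eigen_needed}, and a final multiplication by $\det A$ finishes. The trade-off: your proof is more modular and reusable --- it shows that \emph{any} additive L\"{o}wner-order inequality of the form $P+Q\leq R$ with $P>0$, $Q\geq 0$ automatically yields the corresponding $n$-th-root determinant inequality, and the matrix Minkowski lemma you isolate works for non-commuting $P,Q$ --- whereas the paper's proof is self-contained at the scalar level, relying only on Corollary \ref{full_comp_heinz} rather than on the matrix theorem that the paper states without a written proof.
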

\begin{proof}
 For $A,B\in\mathbb{M}_n^{++}$, let $X=A^{-\frac{1}{2}}BA^{-\frac{1}{2}}.$ Now, using Corollary \ref{full_comp_heinz} then \eqref{minkowski}, we obtain
\begin{eqnarray*}
 \det(I\nabla X)^{\frac{1}{n}}&=&\prod_{i=1}^{n}\lambda_i(I\nabla X)^{\frac{1}{n}}\\
&=&\prod_{i=1}^{n}(1\nabla \lambda_i(X))^{\frac{1}{n}}\\
&\geq&\left\{\prod_{i=1}^{n}\left(!_t(1,\lambda_i(X))+\frac{t(1-t)}{\tau(1-\tau)}
(1\nabla\lambda_i(X)-!_{\tau}(1,\lambda_i(X)))\right)\right\}^{\frac{1}{n}}\\
&\geq&\left(\prod_{i=1}^{n}!_t(1,\lambda_i(X))\right)^{\frac{1}{n}}+\frac{t(1-t)}{\tau(1-\tau)}
\left(\prod_{i=1}^{n}(1\nabla\lambda_i(X)-!_{\tau}(1,\lambda_i(X)))\right)^{\frac{1}{n}}\\
&=&\left(\prod_{i=1}^{n}\lambda_i(!_t(I,X))\right)^{\frac{1}{n}}+\frac{t(1-t)}{\tau(1-\tau)}
\left(\prod_{i=1}^{n}\lambda_i(I\nabla X-!_{\tau}(I, X))\right)^{\frac{1}{n}}\\
&=&\det(!_t(I,X))^{\frac{1}{n}}+\frac{t(1-t)}{\tau(1-\tau)}\det\left(I\nabla X-!_{\tau}(I, X)\right)^{\frac{1}{n}}.
\end{eqnarray*}
Now multiplying both sides of the last inequality with $\det A$ and using basic properties of the determinant imply the desired inequality.
\end{proof}

\begin{proposition}
 Let $A,B\in\mathbb{M}_n$ have positive traces, and let $0\leq \tau,t\leq 1$. Then
$$!_t(\tr A, \tr B)+\frac{t(1-t)}{\tau(1-\tau)}(\tr(A\nabla B)-!_{\tau}(\tr A,\tr B))\leq \tr(A\nabla B)$$ when $t\leq r(\tau)$ or $t\geq R(\tau).$ The inequality is reversed if $r(\tau)\leq t\leq R(\tau).$
\end{proposition}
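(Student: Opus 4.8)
The plan is to recognize that this proposition is, after applying the trace, nothing more than the scalar inequality of Corollary~\ref{full_comp_heinz} in its rewritten form \eqref{harmonic_less}, applied to the two positive numbers $\tr A$ and $\tr B$. The hypothesis that $A$ and $B$ have positive traces guarantees $\tr A>0$ and $\tr B>0$, so the scalar harmonic-Heinz means $!_t(\tr A,\tr B)$ and $!_\tau(\tr A,\tr B)$ are well defined. Crucially, every quantity appearing in the statement is already a scalar, so in contrast to the L\"owner-ordering results (such as Proposition~\ref{first_matrix}), no spectral decomposition or functional-calculus argument is required here.

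The one substantive step is to rewrite the arithmetic term using linearity of the trace. Since $A\nabla B=\frac{A+B}{2}$, we have
$$\tr(A\nabla B)=\frac{\tr A+\tr B}{2}=(\tr A)\nabla(\tr B).$$
With this identification, both sides of the claimed inequality coincide, verbatim, with the two sides of \eqref{harmonic_less} for the choice $a=\tr A$ and $b=\tr B$. I would then invoke \eqref{harmonic_less} directly: for $t\le r(\tau)$ or $t\ge R(\tau)$ it gives
$$!_t(\tr A,\tr B)\le (\tr A)\nabla(\tr B)+\frac{t(1-t)}{\tau(1-\tau)}\bigl[!_\tau(\tr A,\tr B)-(\tr A)\nabla(\tr B)\bigr],$$
which is precisely the desired inequality after transposing the bracketed term, while the reverse inequality on $r(\tau)\le t\le R(\tau)$ follows from the reversed direction of \eqref{harmonic_less}.

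I do not anticipate any genuine obstacle. The entire content of the proof is the observation that the trace collapses the matrix statement onto the already-proven scalar statement, together with the elementary linearity identity $\tr(A\nabla B)=(\tr A)\nabla(\tr B)$; the rest is a direct citation of \eqref{harmonic_less}.
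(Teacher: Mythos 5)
Your proposal is correct and matches the paper's own proof, which likewise obtains the result immediately from Corollary~\ref{full_comp_heinz} with $a=\tr A$, $b=\tr B$, using additivity (linearity) of the trace to identify $\tr(A\nabla B)$ with $(\tr A)\nabla(\tr B)$. Your write-up simply makes explicit the same one-line reduction the paper states.
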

\begin{proof}
 The result follows immediately from Corollary \ref{full_comp_heinz}, on letting $a=\tr A, b=\tr B$ and noting that the trace functional is additive.
\end{proof}

On the other hand, Corollary \ref{cor_1/2_multi} can be used to prove some multiplicative matrix versions as follows. In the following computations, we have used the fact that when $A,B\in\mathbb{M}_n^{++}$ commute, then powers of $A$ and $B$ also commute.
\begin{theorem}
 Let $A,B\in\mathbb{M}_n^{++}$ be commuting and let $0\leq t\leq 1.$ Then
$$\left(\frac{AB^{-1}+BA^{-1}+2I}{4}\right)^{4t(1-t)}!_t(A,B)\leq A\nabla B.$$
\end{theorem}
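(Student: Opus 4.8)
The plan is to reduce the matrix statement to the scalar Corollary \ref{cor_1/2_multi} by simultaneous diagonalization, and the whole argument hinges on one algebraic observation that explains \emph{why} the factor $\frac{AB^{-1}+BA^{-1}+2I}{4}$ is the correct matrix analogue of the Kantorovich constant. First I would record the scalar identity
$$\frac{a\nabla b}{a!b}=\frac{(a+b)^2}{4ab}=\frac{ab^{-1}+ba^{-1}+2}{4},\qquad a,b>0,$$
which rewrites the Kantorovich constant appearing in \eqref{multi_ref_heinz_har} in exactly the form that the matrix factor takes in the commuting case. This is the key bridge: it shows that the displayed theorem is nothing but Corollary \ref{cor_1/2_multi} read eigenvalue-by-eigenvalue, once the matrices are placed in a common eigenbasis.

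Next I would invoke simultaneous diagonalization. Since $A,B\in\mathbb{M}_n^{++}$ are commuting Hermitian matrices, there is a unitary $U$ with $A=UD(a_j)U^*$ and $B=UD(b_j)U^*$, where $a_j,b_j>0$. Every matrix in the statement is built from $A$ and $B$ by inverses, sums, products, and continuous functional calculus, all of which preserve this common diagonalization; in particular
$$!_t(A,B)=UD\!\left(!_t(a_j,b_j)\right)U^*,\qquad A\nabla B=UD\!\left(a_j\nabla b_j\right)U^*,$$
$$\frac{AB^{-1}+BA^{-1}+2I}{4}=UD\!\left(\tfrac{a_jb_j^{-1}+b_ja_j^{-1}+2}{4}\right)U^*.$$
By the AM--GM inequality each eigenvalue $\tfrac{a_jb_j^{-1}+b_ja_j^{-1}+2}{4}=\tfrac{(a_j+b_j)^2}{4a_jb_j}$ is at least $1$, so this factor is positive definite and its real power $4t(1-t)$ is again obtained by raising the eigenvalues, giving $UD\!\left(\bigl(\tfrac{a_jb_j^{-1}+b_ja_j^{-1}+2}{4}\bigr)^{4t(1-t)}\right)U^*$.

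Because all of these matrices are diagonalized by the \emph{same} $U$, they mutually commute, so the product on the left-hand side is Hermitian and equals
$$UD\!\left(\Bigl(\tfrac{a_jb_j^{-1}+b_ja_j^{-1}+2}{4}\Bigr)^{4t(1-t)}\,!_t(a_j,b_j)\right)U^*.$$
Using the scalar identity above, its $j$-th eigenvalue is $\bigl(\tfrac{a_j\nabla b_j}{a_j!b_j}\bigr)^{4t(1-t)}!_t(a_j,b_j)$, which is $\le a_j\nabla b_j$ by Corollary \ref{cor_1/2_multi}. Hence the diagonal matrices satisfy $D(\cdots)\le D(a_j\nabla b_j)$, and conjugating by $U$ (which preserves the L\"owner order) yields the claimed inequality.

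The step I expect to be the genuine obstacle, and the only place where the hypothesis that $A$ and $B$ commute is truly used, is the assertion that the \emph{product} of the power factor and $!_t(A,B)$ is Hermitian and diagonalizes in the common basis $U$. For non-commuting $A,B$ the product of two positive matrices need not be Hermitian, so the L\"owner inequality $\le A\nabla B$ would not even be meaningful; commutativity is precisely what collapses the problem to the scalar statement applied coordinate-wise. Everything else is routine functional-calculus bookkeeping, and I would not grind through the verification that functional calculus respects the simultaneous diagonalization.
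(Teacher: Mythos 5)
Your proposal is correct, and it reaches the scalar Corollary \ref{cor_1/2_multi} by a genuinely different reduction than the paper uses. The paper never simultaneously diagonalizes the pair $(A,B)$: it sets $X=A^{-\frac{1}{2}}BA^{-\frac{1}{2}}$, applies the one-variable functional calculus argument (with $a=1$) to get $\left(\frac{X^{-1}+X+2I}{4}\right)^{4t(1-t)}!_t(I,X)\leq I\nabla X$, and then converts the two left-hand factors through the identities \eqref{needed_2_multi_matr} and \eqref{needed_3_multi_matr} before conjugating back by $A^{\frac{1}{2}}$. The delicate step in that route is \eqref{needed_2_multi_matr}, where the non-integer power $4t(1-t)$ is passed through the similarity $\frac{X^{-1}+X+2I}{4}=A^{-\frac{1}{2}}\left(\frac{AB^{-1}+BA^{-1}+2I}{4}\right)A^{\frac{1}{2}}$; this is exactly where commutativity is tacitly needed, since otherwise $AB^{-1}+BA^{-1}+2I$ is not Hermitian and its fractional power (and the Hermiticity of the final left-hand side) is not even well defined. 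Your simultaneous-diagonalization argument makes this role of commutativity explicit and avoids the power-of-a-similarity subtlety entirely, reducing everything to diagonal entries, where Corollary \ref{cor_1/2_multi} together with your identity $\frac{a\nabla b}{a!b}=\frac{ab^{-1}+ba^{-1}+2}{4}$ finishes the job; the trade-off is that your method is confined to the commuting case, whereas the paper's congruence-by-$A^{\frac{1}{2}}$ template is the same machinery it reuses for the non-commuting results elsewhere (e.g., Proposition \ref{first_matrix} and Theorem \ref{det_thm}).
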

\begin{proof}
 Simplifying the inequality of Corollary \ref{cor_1/2_multi}, we obtain
$$\left(\frac{ab^{-1}+ba^{-1}+2}{4}\right)^{4t(1-t)}!_t(a,b)\leq a\nabla b.$$ Letting $a=1, X=A^{-\frac{1}{2}}BA^{-\frac{1}{2}}$ and applying the standard functional calculus argument as above, we obtain
\begin{equation}\label{needed_1_multi_matr}
\left(\frac{X^{-1}+X+2I}{4}\right)^{4t(1-t)}!_t(I,X)\leq I\nabla X.
\end{equation}
We simplify the terms appearing in the above inequality, as follows
\begin{itemize}
\item
 \begin{eqnarray}
\nonumber \left(\frac{X^{-1}+X+2I}{4}\right)^{4t(1-t)}&=&\left(\frac{A^{\frac{1}{2}}BA^{\frac{1}{2}}+A^{\frac{-1}{2}}BA^{\frac{-1}{2}}+2I}{4}\right)^{4t(1-t)}\\
\nonumber &=&\left[A^{\frac{-1}{2}}\left(\frac{AB^{-1}+BA^{-1}+2I}{4}\right)A^{\frac{1}{2}}\right]^{4t(1-t)}\\
\label{needed_2_multi_matr}&=&A^{\frac{-1}{2}}\left(\frac{AB^{-1}+BA^{-1}+2I}{4}\right)^{4t(1-t)}A^{\frac{1}{2}}.
\end{eqnarray}
\item
\begin{eqnarray}
 \nonumber !_t(I,X)&=&\frac{((1-t)I+tX^{-1})^{-1}+(tI+(1-t)X^{-1})^{-1}}{2}\\
\nonumber &=&\frac{((1-t)I+tA^{\frac{1}{2}}B^{-1}A^{\frac{1}{2}})^{-1}+(tI+(1-t)A^{\frac{1}{2}}B^{-1}A^{\frac{1}{2}})^{-1}}{2}\\
\nonumber &=& A^{\frac{-1}{2}} \frac{((1-t)A^{-1}+tB^{-1})^{-1}+(tA^{-1}+(1-t)B^{-1})^{-1}}{2}A^{\frac{-1}{2}}\\
\label{needed_3_multi_matr}&=&A^{\frac{-1}{2}} !_t(A,B)A^{\frac{-1}{2}}.
\end{eqnarray}
\end{itemize}
Now using \eqref{needed_2_multi_matr} and \eqref{needed_3_multi_matr},  \eqref{needed_1_multi_matr} becomes
$$A^{\frac{-1}{2}}\left(\frac{AB^{-1}+BA^{-1}+2I}{4}\right)^{4t(1-t)}!_t(A,B)A^{\frac{-1}{2}}\leq I\nabla X.$$ Conjugating both sides with $A^{\frac{1}{2}}$ implies the result.
\end{proof}

\subsubsection{The arithmetic-harmonic mean inequality}
The proof of the following proposition follows the same steps as that of Proposition \ref{first_matrix}, using the numerical versions in Proposition
\ref{arith_har_num_first}.
\begin{proposition}
Let $A,B\in\mathbb{M}_n^{++}$ and $0<\nu,\tau<1$ be such that $(\tau-\nu)(B-A)\geq 0.$ Then
$$\tau(1-\tau)(A\nabla_{\nu}B-A!_{\nu}B)\leq \nu(1-\nu)(A\nabla_{\tau}B-A!_{\tau}B).$$ If $(\tau-\nu)(B-A)\leq 0$ then the inequality is reversed.
\end{proposition}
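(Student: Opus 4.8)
The plan is to mimic verbatim the functional calculus argument of Proposition \ref{first_matrix}, feeding it the scalar inequality of Proposition \ref{arith_har_num_first} one eigenvalue at a time. First I would set $X=A^{-\frac12}BA^{-\frac12}$, which lies in $\mathbb{M}_n^{++}$, and denote its (positive) eigenvalues by $\{\lambda_j\}$. The crucial preliminary step is to transport the ordering hypothesis through this congruence. Since the L\"owner order is preserved under conjugation, $(\tau-\nu)(B-A)\geq 0$ is equivalent to $(\tau-\nu)(X-I)\geq 0$, and reading this off the spectrum of $X$ gives $(\tau-\nu)(\lambda_j-1)\geq 0$ for every $j$. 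This is precisely the scalar hypothesis $(b-a)(\tau-\nu)\geq 0$ of Proposition \ref{arith_har_num_first} with $a=1$ and $b=\lambda_j$.

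Next, for each $j$ I would invoke Proposition \ref{arith_har_num_first} to obtain
$$\tau(1-\tau)\bigl(1\nabla_{\nu}\lambda_j-1!_{\nu}\lambda_j\bigr)\leq\nu(1-\nu)\bigl(1\nabla_{\tau}\lambda_j-1!_{\tau}\lambda_j\bigr),$$
and assemble these $n$ scalar inequalities into a single L\"owner inequality between the diagonal matrices $D(\cdot)$ carrying the respective entries. Writing $X=UD(\lambda_j)U^*$ for a suitable unitary $U$ and conjugating by $U$ converts the diagonal inequality into
$$\tau(1-\tau)\bigl(I\nabla_{\nu}X-I!_{\nu}X\bigr)\leq\nu(1-\nu)\bigl(I\nabla_{\tau}X-I!_{\tau}X\bigr),$$
which is legitimate because every term is a function of $X$ alone and is therefore simultaneously diagonalized by $U$.

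To finish I would conjugate this last inequality by $A^{\frac12}$, using the identities $A^{\frac12}(I\nabla_{s}X)A^{\frac12}=A\nabla_{s}B$ and $A^{\frac12}(I!_{s}X)A^{\frac12}=A!_{s}B$ for $s=\nu,\tau$; the first follows from $A^{\frac12}XA^{\frac12}=B$, and the second from $X^{-1}=A^{\frac12}B^{-1}A^{\frac12}$ together with $((1-s)I+sX^{-1})^{-1}=A^{-\frac12}(A!_{s}B)A^{-\frac12}$ (the same simplification carried out in \eqref{needed_3_multi_matr}). As conjugation preserves the L\"owner order, this yields the claimed inequality. The reverse inequality under $(\tau-\nu)(B-A)\leq 0$ is obtained identically, replacing the scalar inequality of Proposition \ref{arith_har_num_first} by its reverse.

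The only step demanding genuine care, as opposed to bookkeeping, is the translation of the matrix ordering hypothesis into the scalar one: one must verify that $(\tau-\nu)(B-A)\geq 0$ is \emph{exactly} equivalent, after the congruence by $A^{-\frac12}$ and passage to eigenvalues, to $(\tau-\nu)(\lambda_j-1)\geq 0$ for all $j$. Everything downstream of that is the standard diagonalize–apply–conjugate routine already spelled out in Proposition \ref{first_matrix}, so I would expect no further obstruction.
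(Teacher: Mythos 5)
Your proposal is correct and is precisely the argument the paper intends: the paper's own "proof" simply states that one repeats the functional calculus steps of Proposition \ref{first_matrix} using the scalar result of Proposition \ref{arith_har_num_first}, which is exactly what you carry out. Your extra care in checking that the hypothesis $(\tau-\nu)(B-A)\geq 0$ passes exactly through the congruence by $A^{-\frac{1}{2}}$ to the eigenvalue condition $(\tau-\nu)(\lambda_j-1)\geq 0$ is the one point the paper leaves implicit, and you handle it correctly.
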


On the other hand, using Proposition \ref{arith_har_num_first} and Corollary \ref{arith_har_square},  determinant versions maybe obtained in a similar way to Theorem \ref{det_thm}.
\begin{proposition}
Let $A,B\in\mathbb{M}_n^{++}$ and $0<\nu,\tau<1$ be such that $(\tau-\nu)(B-A)\geq 0.$ Then
$$\det(A!_{\tau}B)^{\frac{1}{n}}+\frac{\tau(1-\tau)}{\nu(1-\nu)}\det(A\nabla_{\nu}B-A!_{\nu}B)^{\frac{1}{n}}\leq \det(A\nabla_{\tau}B)^{\frac{1}{n}}$$
and
$$\det(A!_{\tau}B)^{\frac{2}{n}}+\frac{\tau(1-\tau)}{\nu(1-\nu)}\det(A\nabla_{\nu}B-A!_{\nu}B)^{\frac{2}{n}}\leq \det(A\nabla_{\tau}B)^{\frac{2}{n}}.$$
\end{proposition}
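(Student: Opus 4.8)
The plan is to mimic the proof of Theorem \ref{det_thm}: perform a congruence to reduce to eigenvalues, establish each inequality eigenvalue-by-eigenvalue from the scalar results, and then reassemble the determinants through the Minkowski inequality \eqref{minkowski}. Concretely, I would set $X=A^{-\frac12}BA^{-\frac12}\in\mathbb{M}_n^{++}$ and write $X=UD(\lambda_i)U^*$ with all $\lambda_i>0$. The congruence identities $A!_sB=A^{\frac12}(I!_sX)A^{\frac12}$ and $A\nabla_sB=A^{\frac12}(I\nabla_sX)A^{\frac12}$ (and hence $A\nabla_\nu B-A!_\nu B=A^{\frac12}(I\nabla_\nu X-I!_\nu X)A^{\frac12}$) give, via \eqref{det_eigen_needed}, $\det(A!_sB)=\det A\cdot\prod_i(1!_s\lambda_i)$ and the analogous factorizations for the other terms; so dividing out $\det A$ reduces everything to the eigenvalues $\lambda_i$. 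The hypothesis $(\tau-\nu)(B-A)\ge0$ becomes, under the congruence, $(\tau-\nu)(X-I)\ge0$, i.e. $(\tau-\nu)(\lambda_i-1)\ge0$ for every $i$, which is exactly the scalar ordering required to invoke the numerical results with $a=1$ and $b=\lambda_i$.

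For the first inequality I would apply Proposition \ref{arith_har_num_first}, rewritten as $1!_\tau\lambda_i+\frac{\tau(1-\tau)}{\nu(1-\nu)}(1\nabla_\nu\lambda_i-1!_\nu\lambda_i)\le 1\nabla_\tau\lambda_i$. Setting $a_i=1!_\tau\lambda_i>0$ and $b_i=\frac{\tau(1-\tau)}{\nu(1-\nu)}(1\nabla_\nu\lambda_i-1!_\nu\lambda_i)\ge0$ (nonnegative since the arithmetic mean dominates the harmonic one), Minkowski's inequality \eqref{minkowski} gives $(\prod a_i)^{1/n}+(\prod b_i)^{1/n}\le(\prod(a_i+b_i))^{1/n}$, and the scalar bound $a_i+b_i\le 1\nabla_\tau\lambda_i$ controls the last product by $\prod_i(1\nabla_\tau\lambda_i)$. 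Translating the three products back into determinants and multiplying through by $\det(A)^{1/n}$ produces the first claimed inequality, exactly along the lines of Theorem \ref{det_thm}.

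The second inequality is where a genuinely new step enters, and this is the main obstacle. Corollary \ref{arith_har_square} supplies, for each eigenvalue, the inequality $(1!_\tau\lambda_i)^2+\frac{\tau(1-\tau)}{\nu(1-\nu)}[(1\nabla_\nu\lambda_i)^2-(1!_\nu\lambda_i)^2]\le(1\nabla_\tau\lambda_i)^2$, but this features the difference of squares rather than the quantity $(1\nabla_\nu\lambda_i-1!_\nu\lambda_i)^2$ appearing inside the determinant in the statement. The bridge is the elementary pointwise bound $(1\nabla_\nu\lambda_i-1!_\nu\lambda_i)^2\le(1\nabla_\nu\lambda_i)^2-(1!_\nu\lambda_i)^2$, obtained by factoring the right-hand side as (sum)$\times$(difference) and noting that the nonnegative sum dominates the difference; this yields $(1!_\tau\lambda_i)^2+\frac{\tau(1-\tau)}{\nu(1-\nu)}(1\nabla_\nu\lambda_i-1!_\nu\lambda_i)^2\le(1\nabla_\tau\lambda_i)^2$. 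I would then apply Minkowski \eqref{minkowski} to $a_i=(1!_\tau\lambda_i)^2$ and $b_i=\frac{\tau(1-\tau)}{\nu(1-\nu)}(1\nabla_\nu\lambda_i-1!_\nu\lambda_i)^2$, using $(\prod a_i)^{1/n}=(\prod_i 1!_\tau\lambda_i)^{2/n}$ and likewise for $b_i$, so that the $n$-th roots reassemble as the $\det(\cdot)^{2/n}$ terms; converting back and multiplying by $\det(A)^{2/n}$ gives the result. The one remaining point to check is the boundary case $\lambda_i=1$, where the strict ordering in Corollary \ref{arith_har_square} degenerates, but this is harmless since both sides then reduce to an equality.
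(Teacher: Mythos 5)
Your proposal is correct and follows exactly the route the paper intends: the paper gives no actual proof of this proposition, saying only that the determinant versions "maybe obtained in a similar way to Theorem \ref{det_thm}" from Proposition \ref{arith_har_num_first} and Corollary \ref{arith_har_square}, and your congruence-to-eigenvalues argument followed by Minkowski's inequality \eqref{minkowski} is precisely that recipe. The one point worth highlighting is your bridge inequality $(u-v)^2\le u^2-v^2$ for $u\ge v>0$: this step is genuinely needed, because Corollary \ref{arith_har_square} controls the difference of squares $(1\nabla_\nu\lambda_i)^2-(1!_\nu\lambda_i)^2$, whereas the stated determinant inequality involves the square of the difference $(1\nabla_\nu\lambda_i-1!_\nu\lambda_i)^2$, and the paper's one-line sketch glosses over this mismatch. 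Your handling of that step, together with the observation that the degenerate eigenvalues $\lambda_i=1$ only produce equalities (so Minkowski with nonnegative entries still applies), makes the argument complete.
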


Further, Corollary \ref{arith_har_kanto_cor} implies the following interesting refinement of the arithmetic-harmonic mean inequality for matrices.
\begin{theorem}
Let $A,B$ be commuting matrices in $\mathbb{M}_n^{++}$ and let $0\leq t\leq 1.$ Then
$$\left(\frac{BA^{-1}+AB^{-1}+2I_n}{4}\right)^{4t(1-t)}(A!_{t}B)\leq A\nabla_{t}B.$$
\end{theorem}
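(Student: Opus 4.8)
The plan is to deduce this matrix inequality directly from its scalar counterpart, Corollary~\ref{arith_har_kanto_cor}, by exploiting the commutativity hypothesis. First I would rewrite the Kantorovich constant in the form that appears in the statement: a one-line computation gives
$$K(a,b)=\frac{a\nabla b}{a!b}=\frac{(a+b)^2}{4ab}=\frac{ba^{-1}+ab^{-1}+2}{4},$$
so that Corollary~\ref{arith_har_kanto_cor} reads $(a!_t b)\left(\frac{ba^{-1}+ab^{-1}+2}{4}\right)^{4t(1-t)}\leq a\nabla_t b$ for all $a,b>0$ and $0<t<1$.

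Next I would use the fact that two commuting matrices in $\mathbb{M}_n^{++}$ are simultaneously unitarily diagonalizable: there is a unitary $U$ with $A=UD(a_i)U^*$ and $B=UD(b_i)U^*$, where $a_i,b_i>0$. Because every matrix in the statement is built from $A$, $B$ and their inverses, each of them is diagonalized by the same $U$; in particular $A!_tB=UD(a_i!_t b_i)U^*$, $A\nabla_t B=UD(a_i\nabla_t b_i)U^*$, and the Kantorovich factor becomes $\frac{BA^{-1}+AB^{-1}+2I_n}{4}=UD(K(a_i,b_i))U^*$, a strictly positive Hermitian matrix whose real power is therefore $UD\!\left(K(a_i,b_i)^{4t(1-t)}\right)U^*$. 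Since all factors share the eigenbasis $U$, the product on the left-hand side equals $UD\!\left(K(a_i,b_i)^{4t(1-t)}(a_i!_t b_i)\right)U^*$, and the asserted Löwner inequality reduces eigenvalue-by-eigenvalue to the scalar inequalities $K(a_i,b_i)^{4t(1-t)}(a_i!_t b_i)\leq a_i\nabla_t b_i$, each of which is precisely Corollary~\ref{arith_har_kanto_cor}. Conjugation by $U$ preserves the Löwner order, which finishes the argument.

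Alternatively, one could mimic the conjugation argument used for the preceding multiplicative theorem: set $a=1$ and $X=A^{-\frac{1}{2}}BA^{-\frac{1}{2}}$, apply the functional-calculus step to obtain
$$\left(\frac{X^{-1}+X+2I_n}{4}\right)^{4t(1-t)}(I!_t X)\leq I\nabla_t X,$$
and then simplify via the identities $I!_t X=A^{-\frac{1}{2}}(A!_tB)A^{-\frac{1}{2}}$, $I\nabla_t X=A^{-\frac{1}{2}}(A\nabla_t B)A^{-\frac{1}{2}}$, and $\frac{X^{-1}+X+2I_n}{4}=A^{-\frac{1}{2}}\,\frac{AB^{-1}+BA^{-1}+2I_n}{4}\,A^{\frac{1}{2}}$, before conjugating by $A^{\frac{1}{2}}$.

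The one genuinely delicate point---and the reason the commutativity hypothesis is indispensable---is that the Kantorovich factor $\frac{BA^{-1}+AB^{-1}+2I_n}{4}$ must be a positive Hermitian matrix for the real power $(\,\cdot\,)^{4t(1-t)}$ to be defined at all. Since $(AB^{-1})^*=B^{-1}A$, this factor is Hermitian exactly when $A$ and $B$ commute; commutativity is also what lets me carry the real power through the similarity $A^{-\frac{1}{2}}(\cdot)A^{\frac{1}{2}}$ in the second approach, equivalently what guarantees that the left-hand product is Hermitian and hence genuinely comparable to $A\nabla_t B$ in the Löwner order. Once this is noted, the simultaneous-diagonalization route renders the reduction to Corollary~\ref{arith_har_kanto_cor} essentially automatic, so I would present that as the main proof.
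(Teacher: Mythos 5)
Your proposal is correct. Note first that the paper does not print a separate proof of this theorem: it simply asserts that it follows from Corollary~\ref{arith_har_kanto_cor}, the intended argument being the same template as the preceding multiplicative theorem, namely setting $X=A^{-\frac{1}{2}}BA^{-\frac{1}{2}}$, applying the scalar inequality through the functional calculus of the single matrix $X$, rewriting $\left(\frac{X^{-1}+X+2I}{4}\right)^{4t(1-t)}$ and $I!_tX$ via the conjugation identities, and finally conjugating by $A^{\frac{1}{2}}$ --- which is exactly your second, alternative route. Your primary route (simultaneous unitary diagonalization of the commuting pair $A,B$, followed by an eigenvalue-by-eigenvalue application of Corollary~\ref{arith_har_kanto_cor}) is genuinely different and, in this commuting setting, arguably cleaner: it avoids carrying a real matrix power through the non-unitary similarity $A^{-\frac{1}{2}}(\cdot)A^{\frac{1}{2}}$ and makes it transparent that the left-hand side is Hermitian and that the Löwner order reduces to scalar inequalities. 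What the paper's template buys in exchange is uniformity: the same conjugation argument is reused throughout the paper, including for results (such as Proposition~\ref{first_matrix}) where $A$ and $B$ need not commute, whereas your diagonalization argument is intrinsically tied to commutativity. Two trivial points you may wish to tighten: the endpoint cases $t\in\{0,1\}$ are not covered by Corollary~\ref{arith_har_kanto_cor} (stated for $0<t<1$) but are immediate since the exponent $4t(1-t)$ vanishes and $A!_0B=A\nabla_0B$; and your parenthetical claim that the factor $\frac{BA^{-1}+AB^{-1}+2I_n}{4}$ is Hermitian \emph{exactly} when $A$ and $B$ commute asserts a converse you do not prove and do not need --- the implication from commutativity to Hermiticity is all the proof requires.
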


\subsubsection{The geometric-harmonic mean inequality}
Following the same logic of Theorem \ref{first_matrix} with the aid of Corollary \ref{geo_har_add}, we obtain the following matrix version.
\begin{proposition}
Let $A,B\in\mathbb{M}_n^{++}$ and $0<\nu,\tau<1.$ If $(\tau-\nu)(B-A)\geq 0,$ then
$$\tau(1-\tau)(A\#_{\nu}B-A!_{\nu}B)\leq \nu(1-\nu)(A\#_{\tau}B-A!_{\tau}B),$$
where $A\#_tB=A^{\frac{1}{2}}\left(A^{-\frac{1}{2}}BA^{-\frac{1}{2}}\right)^{t}A^{\frac{1}{2}}$ is the matrix geometric mean.
\end{proposition}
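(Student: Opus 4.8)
The plan is to run the same functional-calculus argument that underlies Proposition \ref{first_matrix}, but now feeding in the scalar Corollary \ref{geo_har_add} eigenvalue by eigenvalue. First I would normalize by congruence: set $X=A^{-\frac12}BA^{-\frac12}\in\mathbb{M}_n^{++}$, so that it suffices to prove the inequality with the pair $(I,X)$ in place of $(A,B)$ and then conjugate back by $A^{\frac12}$. Indeed, from $A\#_tB=A^{\frac12}X^tA^{\frac12}$ one reads off $A\#_tB=A^{\frac12}(I\#_tX)A^{\frac12}$, and since $(1-t)I+tX^{-1}=A^{\frac12}\bigl((1-t)A^{-1}+tB^{-1}\bigr)A^{\frac12}$ one gets $A!_tB=A^{\frac12}(I!_tX)A^{\frac12}$. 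Because the L\"owner order is preserved under conjugation, proving the $(I,X)$-version and then sandwiching by $A^{\frac12}$ recovers the stated inequality.

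The key step is to translate the hypothesis $(\tau-\nu)(B-A)\geq 0$ into a pointwise scalar condition on the spectrum of $X$. Write $X=UD(\lambda_j)U^*$ with $\lambda_j>0$. If $\tau>\nu$ the hypothesis forces $A\leq B$, hence $I=A^{-\frac12}AA^{-\frac12}\leq A^{-\frac12}BA^{-\frac12}=X$, so every $\lambda_j\geq 1$; if $\tau<\nu$ it forces $B\leq A$, hence $X\leq I$ and every $\lambda_j\leq 1$ (the case $\tau=\nu$ being trivial). In either situation $(\tau-\nu)(\lambda_j-1)\geq 0$ for all $j$, which is exactly the hypothesis of Corollary \ref{geo_har_add} applied to the pair $a=1,\ b=\lambda_j$. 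That corollary then yields, for each $j$,
$$\tau(1-\tau)\bigl(1\#_{\nu}\lambda_j-1!_{\nu}\lambda_j\bigr)\leq \nu(1-\nu)\bigl(1\#_{\tau}\lambda_j-1!_{\tau}\lambda_j\bigr).$$

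To finish, I would reassemble these scalar inequalities into a single matrix inequality. Since $I\#_tX=X^t$ and $I!_tX=((1-t)I+tX^{-1})^{-1}$ are both functions of $X$ alone, they are simultaneously diagonalized by $U$; hence forming $D(\cdot)$ of the displayed scalar inequalities and conjugating by $U$ gives
$$\tau(1-\tau)\bigl(I\#_{\nu}X-I!_{\nu}X\bigr)\leq \nu(1-\nu)\bigl(I\#_{\tau}X-I!_{\tau}X\bigr).$$
Conjugating this by $A^{\frac12}$ and using the identifications of the first paragraph produces the claim.

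I expect the only genuinely delicate point to be the ordering translation in the middle step: one must invoke that congruence by $A^{-\frac12}$ preserves the L\"owner order to pass from $(\tau-\nu)(B-A)\geq 0$ to the uniform spectral sign condition $(\tau-\nu)(\lambda_j-1)\geq 0$, which is what licenses the eigenvalue-wise application of the scalar corollary. Everything else is the routine spectral-lifting mechanism already exploited in Proposition \ref{first_matrix}. One minor wrinkle is that Corollary \ref{geo_har_add} is stated with the strict hypothesis $(\tau-\nu)(b-a)>0$, so the boundary eigenvalues $\lambda_j=1$, which contribute the trivial identity $0=0$, should be handled separately or absorbed by a continuity argument.
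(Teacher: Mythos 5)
Your proof is correct and is essentially the paper's own argument: the paper establishes this proposition precisely by invoking the functional-calculus mechanism of Proposition \ref{first_matrix} together with the scalar Corollary \ref{geo_har_add}, which is exactly your spectral-lifting scheme. In fact your write-up supplies details the paper leaves implicit---notably the congruence identities $A\#_tB=A^{\frac12}(I\#_tX)A^{\frac12}$, $A!_tB=A^{\frac12}(I!_tX)A^{\frac12}$, the translation of $(\tau-\nu)(B-A)\geq 0$ into the eigenvalue condition $(\tau-\nu)(\lambda_j-1)\geq 0$, and the trivial equality at $\lambda_j=1$ or $\tau=\nu$---all of which are handled soundly.
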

Again, applying the logic of Theorem \ref{det_thm},  with the aid of Corollary \ref{geo_har_add}, we obtain the determinant version.
\begin{proposition}
Let $A,B\in\mathbb{M}_n^{++}$ and $0<\nu,\tau<1.$ If $(\tau-\nu)(B-A)\geq 0,$ then
$$\det(A!_{\tau}B)^{\frac{1}{n}}+\frac{\tau(1-\tau)}{\nu(1-\nu)}\det(A\#_{\nu}B-A!_{\nu}B)^{\frac{1}{n}}\leq \det(A\#_{\tau}B)^{\frac{1}{n}}.$$
\end{proposition}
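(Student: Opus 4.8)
The plan is to imitate the proof of Theorem \ref{det_thm} almost verbatim, replacing the harmonic--Heinz comparison of Corollary \ref{full_comp_heinz} by the geometric--harmonic comparison in Corollary \ref{geo_har_add}. First I would set $X=A^{-\frac{1}{2}}BA^{-\frac{1}{2}}\in\mathbb{M}_n^{++}$ and record the congruence identities $A\#_tB=A^{\frac{1}{2}}(I\#_tX)A^{\frac{1}{2}}$ and $A!_tB=A^{\frac{1}{2}}(I!_tX)A^{\frac{1}{2}}$, whence $A\#_\nu B-A!_\nu B=A^{\frac{1}{2}}(I\#_\nu X-I!_\nu X)A^{\frac{1}{2}}$. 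Since $I\#_tX$ and $I!_tX$ are functions of $X$, they are simultaneously diagonalizable and their eigenvalues are $1\#_t\lambda_i(X)$ and $1!_t\lambda_i(X)$; moreover $I\#_\nu X-I!_\nu X\in\mathbb{M}_n^{+}$ by the scalar geometric--harmonic inequality.

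Next I would transfer the order hypothesis to the spectrum of $X$. Because the L\"{o}wner order is preserved under congruence, $(\tau-\nu)(B-A)\geq0$ is equivalent to $(\tau-\nu)(X-I)\geq0$, and hence $(\tau-\nu)(\lambda_i(X)-1)\geq0$ for every $i$. Rewriting Corollary \ref{geo_har_add} in the additive form
$$1!_\tau\lambda+\frac{\tau(1-\tau)}{\nu(1-\nu)}\bigl(1\#_\nu\lambda-1!_\nu\lambda\bigr)\leq 1\#_\tau\lambda,$$
which is exactly what Corollary \ref{geo_har_add} yields when $(\tau-\nu)(\lambda-1)\geq0$ (the case $\lambda=1$ holding with equality), I may apply it with $\lambda=\lambda_i(X)$ for each $i$.

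Then I would take the product over $i$, pass to the $n$th root, and invoke Minkowski's inequality \eqref{minkowski} with $a_i=1!_\tau\lambda_i(X)$ and $b_i=\frac{\tau(1-\tau)}{\nu(1-\nu)}\bigl(1\#_\nu\lambda_i(X)-1!_\nu\lambda_i(X)\bigr)$, pulling the scalar factor out of the second geometric mean. Reading each product back as a determinant through \eqref{det_eigen_needed} gives
$$\det(I\#_\tau X)^{\frac1n}\geq\det(I!_\tau X)^{\frac1n}+\frac{\tau(1-\tau)}{\nu(1-\nu)}\det(I\#_\nu X-I!_\nu X)^{\frac1n}.$$
Finally, multiplying through by $\det(A)^{\frac1n}$ and using multiplicativity of the determinant together with the congruence identities above converts every $I$-term into the corresponding $A,B$-term, which is precisely the asserted inequality.

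I expect no serious obstacle, as the argument is structurally identical to Theorem \ref{det_thm}. The only points demanding genuine care are the faithful translation of the matrix ordering $(\tau-\nu)(B-A)\geq0$ into the scalar condition on the eigenvalues of $X$, and verifying that the numbers $b_i$ are nonnegative so that Minkowski applies, which is guaranteed by $A\#_\nu B\geq A!_\nu B$. The degenerate eigenvalues $\lambda_i(X)=1$ cause no difficulty, since the scalar inequality then holds with equality and the corresponding factor simply drops out.
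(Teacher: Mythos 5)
Your proof is correct and takes essentially the same approach the paper intends: the paper gives no written-out argument for this proposition, saying only that it follows by applying the logic of Theorem \ref{det_thm} with the aid of Corollary \ref{geo_har_add}, which is exactly what you execute (congruence by $A^{-\frac{1}{2}}$, scalar inequality on the eigenvalues of $X$, Minkowski's inequality \eqref{minkowski}, then multiplication by $\det(A)^{\frac{1}{n}}$). Your extra care in transferring the hypothesis $(\tau-\nu)(B-A)\geq 0$ to the spectrum of $X$ and in handling the degenerate cases ($\lambda_i(X)=1$, or a vanishing term in Minkowski) supplies details the paper leaves implicit.
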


\subsection{Young-type inequality}
As mentioned in the introduction, the inequality
$$a\#_t b\leq a\nabla_t b, a,b>0, 0\leq t\leq 1$$ is well known. This inequality is usually referred to as Young's inequality.\\
Matrix versions of this inequality have different forms. For example, applying the functional calculus argument of the above section implies the well known matrix version
$$A\#_tB\leq A\nabla_t B, A,B\in\mathbb{M}_n^{++}, 0\leq t\leq 1.$$ The other matrix version of Young's inequality is a unitarily invariant norm version stating, for $0\leq t\leq 1,$
\begin{equation}\label{young_matr_norm}
 \||A^{1-t}XB^{t}\||\leq (1-t)\||AX\||+t\||XB\||, A,B\in\mathbb{M}_n^{++}, X\in\mathbb{M}_n
\end{equation}
 for any unitarily invariant norm $\||\;\;\||,$ \cite{omarkittaneh}. Recall that these are norms satisfying $\||UXV\||=\||X\||$ for any $X\in\mathbb{M}_n^{+}$ and any unitary matrices $U,V.$ The proof of \eqref{young_matr_norm} is based on two inequalities; the first is the H\"{o}lder matrix inequality \cite{k}, stating for $A,B\in\mathbb{M}_n^{+}, X\in\mathbb{M}_n$ and $0\leq t\leq 1,$
\begin{equation}\label{holder_matr}
\||A^{1-t}XB^{t}\||\leq \||AX\||^{1-t}\||XB\||^{t},
\end{equation}
then applying the Young inequality.\\
However, a stronger version of \eqref{young_matr_norm} can be shown for the Hilbert-Shmidt norm as follows \cite{Bh,kosaki},
\begin{equation}\label{young_matr_2_norm}
\|A^{1-t}XB^{t}\|_2\leq \|(1-t)AX+t XB\|_2,
\end{equation}
where $\|\;\;\|_2$ is the Hilbert-Schmidt norm defined, for $A=[a_{ij}]\in\mathbb{M}_n$ by $$\|A\|_2=\left(\sum_{i,j}|a_{ij}|^2\right)^{\frac{1}{2}}.$$ It is well known that $\|\;\;\|_2$ is unitarily invariant.\\
A stronger version was shown in \cite{ando} for the singular values as follows
$$s_j(A^{1-t}B^{t})\leq s_j((1-t)A+t B),$$ where $s_j$ is the $j-$th singular value, when written in a decreasing order. This last inequality implies $\||A^{1-t}B^{t}\||\leq \||(1-t)A+t B\||$ for any unitarily invariant norm. However, this inequality is not valid when an arbitrary $X$ is included. That is, the inequality $\||A^{1-t}XB^{t}\||\leq \||(1-t)AX+t XB\||$ is not necessarily valid for norms other than the $\|\;\;\|_2$ norm, \cite{Bh}.

Our next result is a harmonic-mean version of \eqref{young_matr_2_norm}. We remark that such versions have never been seen in the literature. We claim that introducing such a result will attract researchers in the field and will open the door for more results.\\
Before stating our result, we remind the reader that a function $f:(0,\infty)\to\mathbb{R}$ is said to be convex if
$$f\left(\sum_{i=1}^{n}\alpha_i x_i\right)\leq \sum_{i=1}^{n}\alpha_i f(x_i),$$ for $\{x_i\}\subset (0,\infty)$ and $\{\alpha_i\}$ is a convex sequence. That is, $\alpha_i\geq 0$ and $\sum_{i}\alpha_i=1.$
\begin{theorem}\label{thm_har_young}
 Let $A,B\in\mathbb{M}_n^{++}, X\in\mathbb{M}_n$ and $0\leq t\leq 1.$ If $(1-t)A^{-1}X+t XB^{-1}\not=0$ and $(1-t)AX+t XB\not=0$, then
\begin{equation}\label{young_harmonic_matr}
\|X\|_2^2\;\|(1-t)A^{-1}X+t XB^{-1}\|_2^{-1}\leq \|(1-t)AX+t XB\|_2.
\end{equation}
\end{theorem}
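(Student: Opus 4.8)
The plan is to reduce \eqref{young_harmonic_matr} to a weighted scalar inequality by diagonalizing $A$ and $B$ and then to combine the Cauchy--Schwarz inequality with the elementary arithmetic--harmonic mean inequality (which is where convexity enters). First I would fix spectral decompositions $A=U\,D(\lambda_i)\,U^*$ and $B=V\,D(\mu_j)\,V^*$ with all $\lambda_i,\mu_j>0$ and unitary $U,V$, and put $Y=U^*XV$. Since the Hilbert--Schmidt norm is unitarily invariant, $\|X\|_2=\|Y\|_2$, and conjugating on the left by $U^*$ and on the right by $V$ preserves the norm of each of the two matrices in \eqref{young_harmonic_matr}. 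A direct computation gives
$$U^*\big[(1-t)AX+tXB\big]V=(1-t)D(\lambda_i)\,Y+t\,Y\,D(\mu_j),$$
whose $(i,j)$ entry is $p_{ij}\,y_{ij}$ with $p_{ij}=(1-t)\lambda_i+t\mu_j$; similarly the $(i,j)$ entry of $U^*[(1-t)A^{-1}X+tXB^{-1}]V$ is $q_{ij}\,y_{ij}$ with $q_{ij}=(1-t)\lambda_i^{-1}+t\mu_j^{-1}$.

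After these identifications, the three squared norms become $\|X\|_2^2=\sum_{i,j}|y_{ij}|^2$, $\|(1-t)AX+tXB\|_2^2=\sum_{i,j}p_{ij}^2|y_{ij}|^2$, and $\|(1-t)A^{-1}X+tXB^{-1}\|_2^2=\sum_{i,j}q_{ij}^2|y_{ij}|^2$, so that after clearing the positive denominator and squaring, the claim reduces to the scalar statement
$$\left(\sum_{i,j}|y_{ij}|^2\right)^2\leq\left(\sum_{i,j}p_{ij}^2|y_{ij}|^2\right)\left(\sum_{i,j}q_{ij}^2|y_{ij}|^2\right).$$
My next step would be to apply the Cauchy--Schwarz inequality to the families $\{p_{ij}|y_{ij}|\}$ and $\{q_{ij}|y_{ij}|\}$, yielding
$$\left(\sum_{i,j}p_{ij}q_{ij}|y_{ij}|^2\right)^2\leq\left(\sum_{i,j}p_{ij}^2|y_{ij}|^2\right)\left(\sum_{i,j}q_{ij}^2|y_{ij}|^2\right),$$
so it suffices to show $\sum_{i,j}|y_{ij}|^2\leq\sum_{i,j}p_{ij}q_{ij}|y_{ij}|^2$, which follows immediately from the pointwise bound $p_{ij}q_{ij}\geq1$.

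The conceptually central (and convexity-driven) step is this last pointwise bound. Noting that $p_{ij}=\lambda_i\nabla_t\mu_j$ and $q_{ij}=(\lambda_i!_t\mu_j)^{-1}$, the inequality $p_{ij}q_{ij}\geq1$ is exactly the scalar arithmetic--harmonic mean inequality $\lambda_i!_t\mu_j\leq\lambda_i\nabla_t\mu_j$, itself a consequence of the convexity of $x\mapsto x^{-1}$ (Jensen's inequality gives $((1-t)\lambda_i^{-1}+t\mu_j^{-1})^{-1}\leq(1-t)\lambda_i+t\mu_j$); equivalently one expands $p_{ij}q_{ij}=(1-t)^2+t^2+t(1-t)(\lambda_i/\mu_j+\mu_j/\lambda_i)\geq(1-t)^2+t^2+2t(1-t)=1$ using $x+x^{-1}\geq2$. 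Chaining the three displays and taking square roots recovers \eqref{young_harmonic_matr}. I expect the only real care to be needed in the conjugation bookkeeping of the first paragraph --- confirming that the left action of $A^{\pm1}$ and the right action of $B^{\pm1}$ become precisely the scalar multipliers $p_{ij},q_{ij}$ on the entries of $Y$ --- since once the problem is diagonalized the remaining argument is a single Cauchy--Schwarz estimate together with the one-line AM--HM bound.
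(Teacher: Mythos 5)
Your proposal is correct: the diagonalization bookkeeping is right (the $(i,j)$ entry of $U^*[(1-t)AX+tXB]V$ is indeed $p_{ij}y_{ij}$, and similarly for the inverses), clearing the positive denominator and squaring is legitimate under the stated nonvanishing hypotheses, and the chain Cauchy--Schwarz plus $p_{ij}q_{ij}\geq 1$ closes the argument. The reduction to a scalar inequality via spectral decompositions and unitary invariance of $\|\cdot\|_2$ is exactly the paper's reduction, but your scalar estimate is organized differently. The paper first normalizes $\|X\|_2=1$ so that the weights $|y_{ij}|^2$ form a probability distribution, then applies Jensen's inequality for the convex function $f(x)=x^{-1}$ to the aggregated sum $\sum_{i,j}|y_{ij}|^2p_{ij}^2$, and finally uses the pointwise convexity bound $p_{ij}^{-1}=f\bigl((1-t)\lambda_i+t\mu_j\bigr)\leq (1-t)\lambda_i^{-1}+t\mu_j^{-1}=q_{ij}$ inside the sum, dehomogenizing at the end. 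You instead stay homogeneous (no normalization), pair $\{p_{ij}|y_{ij}|\}$ against $\{q_{ij}|y_{ij}|\}$ in Cauchy--Schwarz, and insert the pointwise AM--HM bound $p_{ij}q_{ij}\geq1$ before rather than after the quadratic step. The two are close cousins --- Jensen for $x\mapsto x^{-1}$ with normalized weights \emph{is} Cauchy--Schwarz applied to $\{p_{ij}|y_{ij}|\}$ and $\{|y_{ij}|/p_{ij}\}$ --- but your version is cleaner in two respects: it avoids the normalization/rescaling step entirely, and it makes the $p\leftrightarrow q$ symmetry explicit. What the paper's convexity framing buys is an immediate upgrade: replacing the pointwise convexity bound by log-convexity of $f(x)=x^{-1}$, i.e.\ $f((1-t)\lambda_i+t\mu_j)\leq f^{1-t}(\lambda_i)f^{t}(\mu_j)$, yields the paper's subsequent stronger theorem with $\|A^{1-t}XB^{t}\|_2$ on the right-hand side; your scheme admits the same upgrade (pair $\{\lambda_i^{1-t}\mu_j^{t}|y_{ij}|\}$ with $\{q_{ij}|y_{ij}|\}$ and use Young's inequality pointwise in place of AM--HM), so nothing is lost.
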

\begin{proof}
If $X=0,$ the result is trivial. So, we assume that $X\not=0.$ Suppose first that $\|X\|_2=1.$ Since $\|\;\;\|_2$ is a unitarily invariant norm, we have $\|U^*XV\|_2=1$ for any unitary matrices $U^*,V.$ Therefore, if we let $Y=U^*XV$, then
\begin{equation}\label{needed_1_har_young}
\sum_{i,j}|y_{ij}|^2=\|Y\|_2^2=\|X\|_2^2=1.
\end{equation}
Consequently, if $f:(0,\infty)\to\mathbb{R}$ is a convex function, we have
\begin{equation}\label{needed_2_har_young}
f\left(\sum_{i,j}|y_{ij}|^2x_i\right)\leq \sum_{i,j}|y_{ij}|^2f(x_i),
\end{equation}
for any set $\{x_i\}\subset (0,\infty).$\\
Now since $A,B\in\mathbb{M}_n^{+}$, there are unitary matrices $U,V$ such that
$$A=U D(\lambda_i)U^*\;{\text{and}}\;B=V D(\mu_j)V^*.$$  Adopting these notations, it is trivial to see that
$$(1-t)AX+t XB=U\left([(1-t)\lambda_i+t\mu_j]\circ [y_{ij}]\right)V^*,$$ where $\circ$ stands for the Schur product of two matrices. That is, the entry wise multiplication.\\
Let $f(x)=x^{-1}, x>0$. Then $f$ is convex, and
\begin{eqnarray*}
\left(\|(1-t)AX+t XB\|_2^2\right)^{-1}&=&f\left(\|(1-t)AX+t XB\|_2^2\right)\\
&=&f\left(\|U\left([(1-t)\lambda_i+t\mu_j]\circ [y_{ij}]\right)V^*\|_2^2\right)\\
&=&f\left(\sum_{i,j}|y_{ij}|^2((1-t)\lambda_i+t\mu_j)^2\right)\\
&\leq&\sum_{i,j}|y_{ij}|^2f(((1-t)\lambda_i+t\mu_j)^2)\;({\text{by}}\;\eqref{needed_2_har_young})\\
&=&\sum_{i,j}|y_{ij}|^2f^2((1-t)\lambda_i+t\mu_j)\\
&\leq&\sum_{i,j}|y_{ij}|^2\left((1-t)f(\lambda_i)+t f(\mu_j)\right)^2\\
&=&\sum_{i,j}|y_{ij}|^2\left((1-t)\lambda_i^{-1}+t\mu_j^{-1}\right)^2\\
&=&\|(1-t)A^{-1}X+t XB^{-1}\|_2^2.
\end{eqnarray*}
This proves that
\begin{equation}\label{needed_3_har_young}
 \|(1-t)A^{-1}X+t XB^{-1}\|_2^{-1}\leq \|(1-t)AX+t XB\|_2,
\end{equation}
when $\|X\|_2=1.$ On the other hand, if $\|X\|_2\not=1,$ replace $X$ by $\frac{X}{\|X\|_2}$ in \eqref{needed_3_har_young} to get the desired inequality.
\end{proof}
In particular, when $X=I,$ the identity, we get the following.
\begin{corollary}\label{cor_har_young}
Let $A,B\in\mathbb{M}_n^{++}.$ Then,
$$n\|(1-t)A^{-1}+t B^{-1}\|_2^{-1}\leq \|(1-t)A+t B\|_2.$$
\end{corollary}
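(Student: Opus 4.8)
The plan is to obtain this corollary as the immediate specialization of Theorem \ref{thm_har_young} to the case $X = I$, the $n \times n$ identity matrix. Since that theorem already carries out all of the analytic work (the convexity argument applied to $f(x) = x^{-1}$), the only remaining tasks are to check that its hypotheses are met for this particular choice of $X$ and to evaluate the resulting expressions.

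First I would verify the nonvanishing conditions. Because $A, B \in \mathbb{M}_n^{++}$, the inverses $A^{-1}$ and $B^{-1}$ are again strictly positive, and for $0 \le t \le 1$ the combinations $(1-t)A^{-1} + t B^{-1}$ and $(1-t)A + t B$ are strictly positive as well, since a convex combination of positive definite matrices is positive definite. In particular both are nonzero. Hence the two hypotheses $(1-t)A^{-1}X + t X B^{-1} \ne 0$ and $(1-t)AX + t XB \ne 0$ of Theorem \ref{thm_har_young} hold automatically when $X = I$, so the theorem applies.

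Next I would substitute $X = I$ directly into the conclusion of the theorem. The leading factor becomes $\|I\|_2^2 = \sum_{i,j} |\delta_{ij}|^2 = n$, since the identity matrix has exactly $n$ ones on its diagonal and zeros off it. Moreover $(1-t)A^{-1}I + t I B^{-1} = (1-t)A^{-1} + t B^{-1}$ and $(1-t)AI + t IB = (1-t)A + t B$. Feeding these into the inequality
$$\|X\|_2^2\,\|(1-t)A^{-1}X + t X B^{-1}\|_2^{-1} \le \|(1-t)AX + t XB\|_2$$
produces exactly the asserted bound.

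There is essentially no obstacle here: the entire substance of the result is contained in Theorem \ref{thm_har_young}, and this corollary simply records the value of that inequality at $X = I$ together with the elementary computation $\|I\|_2^2 = n$. The only point that deserves a word of care is confirming that the specialized matrices are genuinely nonzero so that the theorem is legitimately invocable, and this follows at once from the strict positivity of $A$ and $B$.
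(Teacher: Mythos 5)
Your proof is correct and follows exactly the paper's route: the corollary is stated there as the specialization $X=I$ of Theorem \ref{thm_har_young}, with $\|I\|_2^2=n$. Your additional verification that the nonvanishing hypotheses hold automatically (since convex combinations of positive definite matrices are positive definite) is a welcome detail the paper leaves implicit.
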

Observe that the above corollary is sharp in many cases. For example, when $n=1$ we have the equality attained. Moreover, when $A=B=I,$ both sides are equal to $\sqrt{n}.$

We emphasize that, for $0\leq t\leq 1,$ the inequality $A!_tB\leq A\nabla_t B$ follows immediately from the scalar inequality $a!_tb\leq a\nabla_tb.$ Then, the inequality $\||((1-t)A^{-1}+tB^{-1})^{-1}\||\leq \||(1-t)A+tB\||$ follows for any unitarily invariant norm $\||\;\;\||.$ Notice that Theorem \ref{thm_har_young} presents a variant of this inequality for the 2-norm including $X$. Furthermore, if $C\in\mathbb{M}_n^{++}$, one can easily show that $n\|C\|_2^{-1}\leq \|C^{-1}\|_2.$ Indeed, if $f(x)=x^{-1}$ and $C=UD(\lambda_i)U^*$ is the spectral decomposition of $C$, then
\begin{eqnarray*}
\|C\|_2^{-2}&=&f\left(\sum_{i}\lambda_i^2\right)\\
&=&f\left(\sum_{i}\frac{1}{n} n\lambda_i^2\right)\\
&\leq&\sum_{i}\frac{1}{n}f(n\lambda_i^2)\;({\text{by\;convexity\;of}}\;f)\\
&=&\frac{1}{n^2}\|C^{-1}\|_2^2.
\end{eqnarray*}
That is, $n\|C\|_2^{-1}\leq \|C^{-1}\|_2.$ Letting $C=(1-t)A^{-1}+tB^{-1},$ we obtain
$$n\|(1-t)A^{-1}+tB^{-1}\|_2^{-1}\leq \|((1-t)A^{-1}+tB^{-1})^{-1}\|_2.$$ Therefore, Corollary \ref{cor_har_young} follows from this observation.

A stronger version of Theorem \ref{thm_har_young} maybe obtained noting log-convexity of the function $f(x)=x^{-1}, x>0.$ The proof of the next result follows the same logic of Theorem \ref{thm_har_young}, noting that log-convexity of $f(x)=x^{-1}$ implies $f((1-t)\lambda_i+t \mu_j)\leq f^{1-t}(\lambda_i)f^{t}(\mu_j).$
\begin{theorem}
 Let $A,B\in\mathbb{M}_n^{++}, X\in\mathbb{M}_n$ and $0\leq t\leq 1.$ If $(1-t)A^{-1}X+t XB^{-1}\not=0$ and $A^{1-t}XB^{t}\not=0$, then
$$
\|X\|_2^2\;\|(1-t)A^{-1}X+t XB^{-1}\|_2^{-1}\leq \|A^{1-t}XB^{t}\|_2.
$$
\end{theorem}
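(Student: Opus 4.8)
The plan is to mirror the proof of Theorem \ref{thm_har_young} almost verbatim, replacing the single use of midpoint convexity of $f(x)=x^{-1}$ with its \emph{log-convexity}. First I would reduce to the normalized case $\|X\|_2=1$ by the same homogeneity argument at the end of the previous proof: once the inequality is established for unit-norm $X$, replacing $X$ by $X/\|X\|_2$ recovers the general statement. Under the normalization, I diagonalize $A=UD(\lambda_i)U^*$ and $B=VD(\mu_j)V^*$, set $Y=U^*XV$, and record that $\sum_{i,j}|y_{ij}|^2=1$, so that the weights $\alpha_{ij}:=|y_{ij}|^2$ form a convex (probability) weighting exactly as in \eqref{needed_1_har_young}.

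The computational heart is to expand $\|A^{1-t}XB^{t}\|_2^2$ in these coordinates. Since $A^{1-t}XB^{t}=U([\lambda_i^{1-t}\mu_j^{t}]\circ[y_{ij}])V^*$ and the Hilbert--Schmidt norm is unitarily invariant, I get
\begin{equation*}
\|A^{1-t}XB^{t}\|_2^2=\sum_{i,j}|y_{ij}|^2\,\lambda_i^{2(1-t)}\mu_j^{2t}.
\end{equation*}
The key inequality is the pointwise bound coming from log-convexity of $f(x)=x^{-1}$, namely $f\bigl((1-t)\lambda_i+t\mu_j\bigr)\leq f(\lambda_i)^{1-t}f(\mu_j)^{t}$, equivalently $\bigl((1-t)\lambda_i+t\mu_j\bigr)^{-1}\leq \lambda_i^{-(1-t)}\mu_j^{-t}$; squaring gives $\bigl((1-t)\lambda_i+t\mu_j\bigr)^{-2}\leq \lambda_i^{-2(1-t)}\mu_j^{-2t}$. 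Taking reciprocals yields $\lambda_i^{2(1-t)}\mu_j^{2t}\leq \bigl((1-t)\lambda_i+t\mu_j\bigr)^{2}$. I do not need Jensen here for this step; it is a purely pointwise estimate.

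Now I apply convexity of $f(x)=x^{-1}$ to the weighted average, exactly as in \eqref{needed_2_har_young}, to handle $\|(1-t)A^{-1}X+tXB^{-1}\|_2^{2}=\sum_{i,j}|y_{ij}|^2\bigl((1-t)\lambda_i^{-1}+t\mu_j^{-1}\bigr)^2$. Chaining: starting from $\bigl(\|A^{1-t}XB^{t}\|_2^2\bigr)^{-1}=f\bigl(\sum_{i,j}|y_{ij}|^2\lambda_i^{2(1-t)}\mu_j^{2t}\bigr)$, Jensen gives $\leq \sum_{i,j}|y_{ij}|^2 f\bigl(\lambda_i^{2(1-t)}\mu_j^{2t}\bigr)=\sum_{i,j}|y_{ij}|^2\lambda_i^{-2(1-t)}\mu_j^{-2t}$, and then the pointwise bound of the previous paragraph (in the form $\lambda_i^{-2(1-t)}\mu_j^{-2t}\leq f^2(\lambda_i)^{1-t}\cdots$—more precisely $\lambda_i^{-(1-t)}\mu_j^{-t}\leq (1-t)\lambda_i^{-1}+t\mu_j^{-1}$ after a further application of Young's scalar inequality to the exponents) bounds each summand by $\bigl((1-t)\lambda_i^{-1}+t\mu_j^{-1}\bigr)^2$. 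This produces $\|A^{1-t}XB^{t}\|_2^{-2}\leq \|(1-t)A^{-1}X+tXB^{-1}\|_2^{2}$, i.e. the claimed inequality for $\|X\|_2=1$.

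The step I expect to require the most care is bookkeeping the exponents: there are two distinct elementary inequalities in play—the pointwise log-convexity estimate relating $\lambda_i^{1-t}\mu_j^{t}$ to $(1-t)\lambda_i+t\mu_j$, and the ordinary convexity (Jensen) estimate for the reciprocal of the weighted sum—and one must insert them in the correct order so that the squares and the reciprocals line up. The main obstacle is purely this algebra of exponents rather than any conceptual difficulty; the framework of the previous theorem does all the structural work, and the nondegeneracy hypotheses $(1-t)A^{-1}X+tXB^{-1}\neq 0$ and $A^{1-t}XB^{t}\neq 0$ guarantee that every reciprocal written above is well defined.
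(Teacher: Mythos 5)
Your proposal is correct and is essentially the paper's own argument: the paper proves this theorem by rerunning the chain of Theorem \ref{thm_har_young} (normalize $\|X\|_2=1$, diagonalize, expand the Hilbert--Schmidt norms, apply Jensen with $f(x)=x^{-1}$) with the pointwise convexity step replaced by the log-convexity/AM--GM estimate, which is exactly your Jensen-plus-Young computation. One cosmetic remark: the bound $\lambda_i^{2(1-t)}\mu_j^{2t}\leq\bigl((1-t)\lambda_i+t\mu_j\bigr)^2$ derived in your second paragraph is never actually used; the inequality that closes your chain is $\lambda_i^{-(1-t)}\mu_j^{-t}\leq(1-t)\lambda_i^{-1}+t\mu_j^{-1}$, which you correctly identify and which is the same scalar fact the paper invokes as log-convexity of $x\mapsto x^{-1}$.
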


\end{document}